\documentclass[12pt,reqno]{amsart}
\usepackage{amssymb}

\usepackage{amscd}

\newcommand{\RNum}[1]{\uppercase\expandafter{\romannumeral #1\relax}}

\usepackage[T2A]{fontenc}
\usepackage[utf8]{inputenc}
\usepackage[russian,english]{babel}
\input{int.def}

\usepackage{tikz-cd}
\usetikzlibrary{cd}
\usepackage{dirtytalk}
\usepackage{hyperref}
\usepackage{xcolor, enumitem}
\usepackage{centernot}

\numberwithin{equation}{section}

\def\exd{\text{\normalfont{d}}}

\def\fsw{\sf{FSW}}

\parindent = 0pt

\usepackage[mathcal]{euscript}

\usepackage{titlesec}
\titleformat{\section}[runin]{\bfseries}{\thesection.}{3pt}{}[.]

\myitemmargin 
\baselineskip =15.0pt plus 2.5pt
\usepackage{geometry}
\newgeometry{vmargin={25mm}, hmargin={12mm,12mm}}   

\begin{document}

\title[Symplectic mapping class groups of K3 surfaces and Seiberg-Witten 
invariants]%
{Symplectic mapping class groups of K3 surfaces and Seiberg-Witten 
invariants}

\author{Gleb Smirnov}




\begin{abstract}
The purpose of this note is to prove that the 
symplectic mapping class groups of many K3 surfaces 
are infinitely generated. Our proof makes no use of any 
Floer-theoretic machinery but instead follows the approach 
of Kronheimer and uses invariants derived from the Seiberg-Witten 
equations.
\end{abstract}

\maketitle

\setcounter{section}{0}
\section{Main result}
Let $(X,\omega)$ be a symplectic manifold, 
$\symp(X,\omega)$ the symplectomorphism group of $(X,\omega)$, and $\diff(X)$ the diffeomorphism group of $X$. Define
\[
K(X,\omega) = \text{ker}\,\left[ \pi_0 \symp(X,\omega)  \to \pi_0 \diff(X)  \right].
\]
In his thesis \cite{Sei-1}, Seidel found examples where $K(X,\omega)$ 
is non-trivial: If $(X,\omega)$ is a complete intersection that is neither 
$\pp^2$ nor $\pp^1 \times \pp^1$, then there exists a symplectomorphism 
$\tau \colon (X,\omega) \to (X,\omega)$ called the four-dimensional Dehn twist such that 
$\tau^2$ is smoothly isotopic to the 
identity but not symplectically so. Seidel also proved \cite{Sei-2} 
that for certain symplectic 
K3 surfaces $(X,\omega)$ the group $K(X,\omega)$ is infinite. Results of Tonkonog \cite{Ton} show that $K(X,\omega)$ is infinite for 
most hypersurfaces in Grassmannians. Until recently, however, it was unknown whether $K(X,\omega)$ can be infinitely generated. The question has 
been answered in the positive by Sheridan and Smith \cite{Sher-Smith}, 
who gave examples of algebraic K3 surfaces $(X,\omega)$ with $K(X,\omega)$ infinitely generated. The present paper aims to extend their result 
to a large class of K3 surfaces, including some non-algebraic K3 surfaces.
\smallskip%

\noindent
Let $(X,\omega)$ be a K{\"a}hler K3 surface, and let $\kappa = [\omega] \in H^{1,1}(X;\rr)$ be the corresponding K{\"a}hler class. 
We set 
\[
\Delta_{\kappa} = 
\left\{ \delta \in H^2(X;\zz) \ |\ \langle \kappa, \delta \rangle = 0,\ 
\langle \delta, \delta \rangle = -2 \right\},
\]
where $\langle \phantom{\cdot}, \phantom{\cdot} \rangle$ denotes the cup product pairing.
\smallskip%

Our goal in this note is to prove the following statement:
\begin{theorem}\label{t:main}
If $\Delta_{\kappa}$ is infinite, then $K(X,\omega)$ is 
infinitely generated.
\end{theorem}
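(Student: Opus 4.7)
The plan is to construct, for each $\delta \in \Delta_\kappa$, a Lagrangian sphere $L_\delta \subset (X,\omega)$ whose squared generalized Dehn twist $\tau_\delta^2$ represents a class in $K(X,\omega)$, and then to detect those classes by a family of Seiberg-Witten invariants that assemble into a homomorphism $K(X,\omega)^{\ab} \to \bigoplus_{\delta \in \Delta_\kappa} \zz$ whose image has infinite rank. Since a group whose abelianization surjects onto a group of infinite rank cannot be finitely generated, this will prove the theorem.

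First I would produce the Lagrangian spheres. For a \Kahler K3 with \Kahler class $\kappa$, every $\delta \in \Delta_\kappa$ is of type $(1,1)$ with $\delta^2 = -2$, so Riemann-Roch forces $\pm\delta$ to be effective. A Torelli-type deformation of complex structure, combined with a Moser argument that keeps $[\omega]$ fixed, produces a Lagrangian sphere $L_\delta \subset (X,\omega)$ representing $\pm\delta$ together with its generalized Dehn twist $\tau_\delta \in \symp(X,\omega)$. This twist acts on $H_2(X;\zz)$ by reflection in $\delta$, so $\tau_\delta^2$ acts trivially on homology, and Seidel's theorem then gives a smooth isotopy from $\tau_\delta^2$ to $\id_X$, placing $[\tau_\delta^2]$ in $K(X,\omega)$.

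Next I would set up the invariant, following Kronheimer. A class $[f] \in K(X,\omega)$, together with a choice of smooth nullisotopy, determines a loop $\gamma_f$ in the space of symplectic forms on $X$ cohomologous to $\omega$, well-defined up to homotopy. For each $\delta \in \Delta_\kappa$, the locus where the unique self-dual harmonic representative of $2\pd(\delta)$ vanishes forms a real codimension-one wall in that space. Counting with sign the number of times $\gamma_f$ crosses the wall attached to $\delta$ defines $\Phi_\delta(f) \in \zz$; equivalently, $\Phi_\delta(f)$ is the family Seiberg-Witten invariant of the mapping torus $Y_f$ with respect to the $\Spin^c$-structure $\frs_\delta$ determined by $c_1(\frs_\delta) = 2\pd(\delta)$. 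Standard arguments show that $\Phi_\delta$ depends only on $[f]$ and gives a homomorphism from $K(X,\omega)^{\ab}$ to $\zz$.

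The main obstacle is the wall-crossing computation of $\Phi_\delta(\tau_{\delta'}^2)$. One must carry out the local Seiberg-Witten count inside the Lefschetz-type model of $\tau_{\delta'}^2$ near $L_{\delta'}$, show it produces a nonzero contribution precisely when $\delta$ and $\delta'$ interact in a suitable way (the expected answer being $\pm 1$ when $\delta = \delta'$ and $0$ otherwise), and verify that no parasitic reducibles appear far from $L_{\delta'}$ along the isotopy. Once the resulting matrix $(\Phi_\delta(\tau_{\delta'}^2))_{\delta,\delta' \in \Delta_\kappa}$ is shown to have infinite rank, assembling the $\Phi_\delta$'s gives a homomorphism $K(X,\omega)^{\ab} \to \bigoplus_{\delta \in \Delta_\kappa} \zz$ whose image is of infinite rank, and the theorem follows.
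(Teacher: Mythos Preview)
Your overall strategy matches the paper's---detect elements of $K(X,\omega)$ via family Seiberg--Witten invariants indexed by $\Delta_\kappa$---but there is a real gap, and the paper's execution differs substantially from what you sketch.

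The gap is your claim that $\Phi_\delta$ is well-defined on $K(X,\omega)$. You assert that the loop $\gamma_f$ is ``well-defined up to homotopy'', but it is not: two smooth nullisotopies of $f$ differ by a loop in $\diff(X)$, so $\gamma_f$ is determined only modulo the image of $\pi_1\diff(X)\to\pi_1(S_{[\omega]})$. Showing that the family SW count annihilates this image is precisely the delicate point for a K3 surface (where $b^+=3$), and it is not a ``standard argument''. The paper does not use a single Kronheimer invariant $Q_\varepsilon$ but the difference $q_\varepsilon = Q_\varepsilon - Q_{-\varepsilon}$, and proves that $q_\varepsilon$ vanishes on loops coming from $\diff(X)$ via a winding-number argument: such a loop closes up to an $S^2$-family whose perturbations can be kept in one half $\mathbf{K}^{-}$ of the positive cone, hence have winding number zero in $\mathbf{K}\simeq S^2$, which forces the charge-conjugation symmetry $\fsw(\frs_\varepsilon)=\fsw(-\frs_\varepsilon)$ and makes $q_\varepsilon$ vanish. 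Without this subtraction-plus-winding-number step your $\Phi_\delta$ has no reason to descend. (Your alternative description as a codimension-one wall count and as an invariant of the mapping torus $Y_f$ is also off: for $b^+=3$ the reducible locus has codimension three, and the relevant family index is zero only over a two-dimensional base, not over $S^1$.)

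On the computational side the paper avoids explicit Lagrangian spheres, squared Dehn twists, and any local Lefschetz-model count. It works instead with the monodromy of the universal family over the moduli space $B=M_\kappa^+$ of marked $\kappa$-polarized K3 surfaces, whose $\pi_1$ is normally generated by small loops $\gamma_{\delta_0}$ around the divisors $H_{\delta_0}$. That this monodromy lands in $K(X,\omega)$ is shown by extending the family \emph{smoothly} across $H_{\delta_0}$ using a nearby polarization $\kappa_0-\hslash\,\delta_0$ and the Burns--Rapoport Torelli theorem for families. The value $q_\delta(\gamma_{\delta_0})$ is then computed by choosing genuinely K\"ahler extensions over the disk, so that Seiberg--Witten solutions correspond bijectively to effective divisors: the unique $(-2)$-curve in the central fiber contributes exactly one solution for $\frs_{\delta_0}$ and none for $\frs_{-\delta_0}$ or any other $\frs_\delta$, and the required transversality is read off directly from transversality of the period map to $H_{\delta_0}$. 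This completely sidesteps the local computation you flag as ``the main obstacle''.
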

\noindent
The plan of the proof is as follows: 
We start from the results of \cite{K} and construct a homomorphism 
\[
q \colon K(X,\omega) \to \prod_{\delta \in \overline{\Delta}_{\kappa}} \zz_2,\quad\text{where $\overline{\Delta}_{\kappa}$ is defined as $\Delta_{\kappa}/\sim$ 
with $\delta \sim (-\delta)$.}
\]
We then consider the moduli space $B$ of marked ($\kappa$-)polarized 
K3 surfaces. This moduli space is a smooth manifold and has the 
following properties:
\begin{enumerate}[label={\arabic*)}]
    \item $B$ is a fine moduli space, 
    meaning it carries a universal family of K3 surfaces 
    $\left\{ X_t \right\}_{t \in B}$ together with 
    a family of fiberwise cohomologous K{\"a}hler forms 
    $\left\{ \omega_t \right\}_{t \in B}$.
\smallskip%

    \item $H_1(B;\zz_2) = \bigoplus_{\delta \in \overline{\Delta}_{\kappa}} \zz_2$\footnote{ By definition, an 
infinite sum of groups $\bigoplus_{i \in \zz} G_i$ is the subgroup of 
$\prod_{i \in \zz} G_i$ consisting of sequences $(g_1,g_2,\ldots)$ such that all $g_i$ are zero but a finite number.}.
\end{enumerate}  
Fix a basepoint $t_0 \in B$. Identify 
$(X,\omega)$ with $(X_t,\omega_{t_0})$. Provided by Moser's theorem, there 
is a monodromy homomorphism 
\[
\pi_1(B, t_0) \to \pi_0 \symp(X,\omega).
\]
We shall prove that the image of this homomorphism is contained in 
$K(X,\omega)$ and that the composite homomorphism
\[
\pi_1(B, t_0) \to \pi_0 \symp(X,\omega) \xrightarrow{q} \prod_{\delta \in \overline{\Delta}_{\kappa}} \zz_2
\]
surjects onto 
$\bigoplus_{\delta \in \overline{\Delta}_{\kappa}} \zz_2 
\subset \prod_{\delta \in \overline{\Delta}_{\kappa}} \zz_2$.
\begin{remark}
Theorem \ref{t:main} has a natural generalization, with practically 
identical proof: There is a homomorphism
\[
K(X,\omega) \to
\prod_{\delta \in \overline{\Delta}_{\kappa}} \zz.
\]
such that the subgroup $\bigoplus_{\delta \in \overline{\Delta}_{\kappa}} \zz \subset \prod_{\delta \in \overline{\Delta}_{\kappa}} \zz$ is in the image of $q$. This stronger version of Theorem \ref{t:main} can be proved by using 
Seiberg-Witten invariants taking values in $\zz$.
\end{remark}

\statebf Acknowledgements. 
\ I thank Jianfeng Lin for his suggestion to consider the winding number 
as a starting point for studying two-dimensional families 
of K3 surfaces and thank Sewa Shevchishin for several valuable discussions about Torelli theorems. 
I am also grateful to an anonymous referee for their helpful comments.

\section{Family Seiberg-Witten invariants}\label{sw-family}
Here, we briefly recall the definition of 
the Seiberg-Witten invariants in the family setting. The given 
exposition is extremely brief, meant mainly to fix 
notations. We refer the reader to \cite{Nic, Morg} for a comprehensive 
introduction to four-dimensional gauge theory. The 
Seiberg-Witten equations for families of smooth 4-manifolds 
have been studied in various works including \cite{K, R1, R2, LL, Nak, Barag, Barag-1}.
\smallskip%

\noindent
Let $X$ be a closed oriented {\itshape simply-connected} 4-manifold, 
$B$ a closed $n$-manifold, $\calx \to B$ a fiber bundle with fiber $X$. 
Choose a family of fiberwise metrics $\left\{ g_b \right\}_{b \in B}$. Pick a spin$^\cc$ structure $\fr{s}$ on the vertical tangent bundle $T_{\calx/B}$ of $\calx$. By restricting 
$\fr{s}$ to a fiber $X_b$ at $b \in B$, 
we get a spin$^\cc$ structure $\fr{s}_b$ on $X_b$. Hereafter, for any object 
on the total space $\calx$, the object with subscript $b$ stands for the restriction 
of the object to the fiber $X_b$. Conversely: Suppose we are given a spin$^\cc$ structure 
$\fr{s}_b$ on $X_b$. When can we find a spin$^\cc$ structure on $T_{\calx/B}$ whose 
restriction to $X_b$ is $\fr{s}_b$? The following is a sufficient condition: $B$ is a homotopy $S^2$. (This is the only case we will be considering in the sequel.) Let us briefly sketch why this is sufficient. 
Chapter 3 in \cite{Morg} presents necessary preliminaries on spin$^\cc$ structures.
\begin{lemma}
Let $\calx \to B$ be a fiber bundle whose fiber $X_b$ is 
a closed simply-connected 4-manifold, and whose base $B$ is a homotopy $S^2$. 
Suppose we are given a spin$^\cc$ structure $\fr{s}_b$ on $X_b$. 
Then there exists a spin$^\cc$ structure $\fr{s}$ on $T_{\calx/B}$ extending 
the spin$^\cc$ structure $\fr{s}_b$ on $X_b$.
\end{lemma}
\begin{proof}
We begin with a general result on spin$^\cc$ structures. 
Let $Y$ be an orientable manifold, which does not need to be four-dimensional nor closed. 
Let $V \to Y$ be a real oriented rank 4 vector bundle over $Y$. 
Endow $V$ with a positive-definite inner product so that the 
structure group of $V$ is $\SO(4)$. Suppose that its Stiefel-Whitney 
class $w_2(V) \in H^2(Y;\zz_2)$ can be lifted to an integral class $c_1(\call)$, 
for some complex line bundle $\call \to Y$. Then there is a spin$^\cc$ structure $\fr{s}$ whose 
determinant line bundle is $\call$; that is, we have 
\[
c_1(\fr{s}) = c_1(\call).
\]
On the other hand, if a bundle carries one 
spin$^\cc$ structure, it carries many; they are parameterized by the 
elements in $2\,H^2(Y;\zz) \oplus H^1(Y;\zz_2)$. In particular, if 
$H^1(Y;\zz_2)$ is trivial, then the Chern class $c_1(\fr{s})$ determines uniquely the spin$^\cc$ structure $\fr{s}$. 
\smallskip%

Specialize to the case of $Y = \calx$. What remains is to show that 
$w_2(T_{\calx/B}) \in H^2(\calx;\zz_2)$ 
lifts to a class $a \in H^2(\calx;\zz)$ whose restriction to $X_b$ is 
equal to $c_1(\fr{s}_b)$. Since $X$ is simply-connected, the group 
$H^1(X;\zz_2)$ vanishes. Thus, we may choose $\fr{s}$ such that 
$c_1(\fr{s}) = a \in H^2(\calx;\zz)$, and the extension is done.

Using a Mayer-Vietoris argument, we obtain the following exact sequence:
\[
0 \to H^2(B;\zz) \to H^2(\calx;\zz) \to H^2(X_b;\zz) \to 0.
\]
Here the first arrow comes from the projection $\calx \to B$, whereas the latter 
arrow is induced by the inclusion $X_b \to \calx$. This exact 
sequence provides a lift of $c_1(\fr{s}_b) \in H^2(X_b;\zz)$ to a class 
$a \in H^2(\calx;\zz)$. Such a lift is not unique; however, letting $e$ denote 
the generator of $H^2(B;\zz)$, one writes all other lifts as translates 
$a + k\,e$ by $k$'s from $\zz$. It is clear that either $a$ or $a + e$ has to be 
an integral lift of $w_2(T_{\calx/B})$.\qed
\end{proof}
\medskip%

Fix a spin$^\cc$ structure $\fr{s}$ on $T_{\calx/B}$. Associated to $\fr{s}$, there are 
spinor bundles $W^{\pm} \to B$ and determinant line bundle $\call$, which we regard as 
families of bundles 
\[
W^{\pm} = \bigcup_{b \in B} W^{\pm}_b,\quad \call = \bigcup_{b \in B} \call_{b}.
\]
Let $\cala_{b}$ be the space of $\UU(1)$-connections on $\call_b$, $\calu_b$ the 
gauge groups acting on $(W^{\pm}_b,\cala_b)$ as follows:
\[
\text{for $u_b = e^{-i f_b} \in \calu_b$ and $(\varphi_b,A_b) \in W^{+}_b \times \cala_b$,}\quad 
u_b \cdot (\varphi_b,A_b) = (e^{-i f_b} \varphi_b, A_b + 2 i \exd\,f_b).
\]
Given $b \in B$, let $\Pi_b$ be the space of $g_b$-self-dual forms on $X_b$, $\Pi_b^{*} \subset \Pi_b$ be the subset of $\Pi_b$ given by
\begin{equation}\label{eq:irreducible}
\langle \eta_b \rangle_{g_b} + \langle 2 \pi c_1(\call_b) \rangle_{g_b} \neq 0,
\end{equation}
where $\langle \eta_b \rangle_{g_b}$ is the 
harmonic part of $\eta_b$ and $\langle 2 \pi c_1(\call_b) \rangle_{g_b}$ is the 
self-dual part of the harmonic representative of the class 
$2 \pi [c_1(\call_b)] \in H^2(X_b;\rr)$. For the family of metrics 
$\left\{ g_b \right\}_{b \in B}$, let $\Pi^{*}$ be the set of all pairs 
$(g_b, \eta_b)$ where $\eta_b \in \Pi_b^{*}$ and $g_b$ varies with $b \in B$. 
$\Pi^{*}$ may be thought of as the fiber bundle over $B$ whose fiber over 
$b \in B$ is the space $\Pi_b^{*}$.
\smallskip%

\noindent
Given a family of fiberwise self-dual 2-forms $\left\{ \eta_b \right\}_{b \in B}$ 
satisfying \eqref{eq:irreducible}, the Seiberg-Witten equations with 
perturbing terms $\left\{ \eta_b \right\}_{b \in B}$ are equations for 
a family $\left\{(\varphi_b,A_b)\right\}$. The equations are:
\begin{equation}\label{eq:sw}
 \begin{cases}
   \cald_{A_b} \varphi_b = 0, 
   \\
   F^{+}_{A_{b}}  = \sigma(\varphi_b) + i\,\eta_b,
 \end{cases}
\end{equation}
where $\cald_{A_b} \colon \Gamma(W_{b}^{+}) \to \Gamma(W_{b}^{-})$ is the Dirac operator, 
$\sigma(\varphi)$ is the squaring map, and $F^{+}_{A_{b}}$ is the self-dual part of the curvature of 
$A_b$. Letting
\begin{multline}
\scrm(g_b,\eta_b) = \left\{ (\varphi_b,A_b) \in \Gamma(W^{+}_b) \times \cala_b\ |\ 
\text{$(\varphi_b,A_b)$ is a solution to \eqref{eq:sw}}
\right\}/\sim,\\
\quad \text{$(\varphi_b,A_b) \sim (\varphi'_b,A'_b)$ if $u_b \cdot (\varphi'_b,A'_b) = (\varphi_b,A_b)$ for some $u_b \in \calu_b$,} 
\end{multline}
we define the parametrized moduli space as:
\[
\fr{M}^{\fr{s}} = \bigcup_{b \in B,\, \eta_b \in \Pi^{*}_b} \scrm(g_b,\eta_b).
\]
We let $\pi_{\fr{s}} \colon \fr{M}^{\fr{s}} \to \Pi^{*}$ be the projection 
whose fiber over $(g,\eta) \in \Pi^{*}$ is $\scrm(g,\eta)$. It is shown in 
\cite{K-M} that $\pi_{\fr{s}}$ is a smooth and proper Fredholm map. The index of 
$\pi_{\fr{s}}$ is given by:
\[
\text{ind}\,\pi_{\fr{s}} = \frac{1}{4}( c_1^2(\mathfrak{s}_b) - 3 \sigma(X) -2 \chi(X) ),
\]
where $c_1(\mathfrak{s}_b) = c_1(\call_b)$ is the Chern class of $\mathfrak{s}_b$. 
\smallskip%

\noindent
Fix a family of fiberwise self-dual 2-forms $\left\{ \eta_b \right\}_{b \in B}$ 
satisfying \eqref{eq:irreducible}, and consider it as a section of $\Pi^{*}$. 
If $\left\{ \eta_b \right\}_{b \in B}$ is chosen generic, then the moduli space 
\[
\fr{M}_{(g_b,\eta_b)}^{\fr{s}} = \bigcup_{b \in B} \pi^{-1}_{\fr{s}} (g_b,\eta_b)
\]
is either empty or a compact manifold of dimension
\[
d(\mathfrak{s},B) = 
\frac{1}{4}( c_1^2(\mathfrak{s}_b) - 3 \sigma(X) -2 \chi(X) ) + n.
\]
Now suppose that $d(\mathfrak{s},B) = 0$. 
Then $\fr{M}_{(g_b,\eta_b)}^{\fr{s}}$ is zero-dimensional, 
and thus consists of finitely-many points. We call
\begin{equation}\label{eq:sw-def}
\fsw_{(g_b,\eta_b)}(\fr{s}) = 
\# \left\{ \text{points of $\fr{M}_{(g_b,\eta_b)}^{\fr{s}}$} \right\}\,\mod\,2
\end{equation}
the family ($\zz_2$-)Seiberg-Witten invariant for the 
spin$^\cc$ structure $\fr{s}$ 
with respect to the family $\left\{ (g_b,\eta_b) \right\}_{b \in B}$. The following 
properties of family invariants are well-known:
\begin{enumerate}[label={\arabic*)}]
    \item There is a \say{charge conjugation} involution $\fr{s} \to -\fr{s}$ on 
    the set of spin$^\cc$ structures that changes the sign of $c_1(\fr{s})$. This involution 
    provides us with a canonical isomorphism between 
\[
\fr{M}_{(g_b,\eta_b)}^{\fr{s}}\quad \text{and}\quad 
\fr{M}_{(g_b,-\eta_b)}^{-\fr{s}}. 
\]
Hence,
\begin{equation}\label{eq:charge-family}
\fsw_{(g_b,\eta_b)}(\fr{s}) = \fsw_{(g_b,-\eta_b)}(-\fr{s}).
\end{equation}
See, e.g., Proposition 2.2.22 in \cite{Nic}. 
The corresponding $\zz$-valued Seiberg-Witten invariants 
are also equal to each other, but only up to sign. 
See Proposition 2.2.26 in \cite{Nic} for the precise statement.
\smallskip%
    
    \item If $\fr{s}$, $\fr{s}'$ are two spin$^\cc$ structures on 
    $T_{\calx/B}$ that are isomorphic on $X_b$ for each $b \in B$, 
    then
    \[
    \fsw_{(g_b,\eta_b)}(\fr{s}) = \fsw_{(g_b,\eta_b)}(\fr{s}'),
    \]
    in fact, the corresponding moduli spaces $\fr{M}_{(g_b,\eta_b)}^{\fr{s}}$ and 
    $\fr{M}_{(g_b,\eta_b)}^{\fr{s}'}$ are canonically diffeomorphic. 
    See \cite[\S\,2.2]{Barag-1} for details. 
\smallskip%

    \item Suppose we have two families $\left\{ \eta_{b} \right\}_{b \in B}$, 
    $\left\{ \eta'_{b} \right\}_{b \in B}$ of $g_b$-self-dual $2$-forms satisfying \eqref{eq:irreducible}. Suppose further that they are homotopic, when considered as 
    sections of $\Pi^{*}$; then
    \[
    \fsw_{(g_b,\eta_b)}(\fr{s}) = \fsw_{(g_b,\eta'_b)}(\fr{s}).
    \]
    This is proved by applying the Sard-Smale theorem. See \cite[\S\,2]{LL} for details. 
    More generally, the family Seiberg-Witten invariants are unchanged under the homotopies 
    of $\left\{ (g_b, \eta_{b}) \right\}_{b \in B}$ that satisfy \eqref{eq:irreducible}.
\end{enumerate}

\section{Unwinding families}\label{sec:unwinding}
Let $\calx$ be a fiber bundle over $B$ with fiber $X$. 
From now on, we assume that $B$ is the 2-sphere $S^2$ and $X$ is the K3 surface. 
Pick a family $\left\{ g_b \right\}_{b \in B}$ 
of fiberwise metrics on the fibers of $\calx$. 
Let $\fr{s}_b$ be a spin$^{\cc}$ structure on a fiber $X_b$, and let 
$\fr{s}$ be a spin$^{\cc}$ structure on $T_{\calx/B}$ extending $\fr{s}_b$.
\smallskip%

\noindent
The group $H_2(X;\zz)$ is a free abelian group 
of rank $22$ which, when endowed 
with the bilinear form coming from the cup product, becomes 
a unimodular lattice of signature $(3,19)$. Let us fix (once and for all) 
an abstract lattice $\Lambda$ which isometric to $H^2(X;\zz)$ and 
an isometry $\alpha \colon H^2(X_b;\zz) \to \Lambda$, where $b \in B$ is some fixed base-point.
Since $B$ is simply-connected, the groups 
$\left\{ H^2(X_b;\zz) \right\}_{b \in B}$ 
are all canonically isomorphic to each other, and hence they 
are isomorphic to $\Lambda$ through the isometry $\alpha$. 
Let $\mathbf{K} \subset \Lambda \otimes \rr$ be the (open) 
positive cone: 
\[
\mathbf{K} = \left\{ 
\kappa \in \Lambda \otimes \rr \,|\, \kappa^2 > 0 \right\},
\]
which is homotopy-equivalent to $S^2$.
\smallskip%

\noindent
Let $H_b$ be the space of $g_b$-self-dual 
harmonic forms on $X_b$, and let 
$\calh \to B$ be the vector bundle whose fiber 
over $b \in B$ is $H_b$. Pick 
a family $\left\{ \eta_b \right\}_{b \in B}$ of 
$g_b$-self-dual forms. Suppose that $(g_b,\eta_b)$ satisfies 
\[
\langle \eta_b \rangle_{g_b} \neq 0\quad 
\text{for each $b \in B$,}
\]
so that the correspondence 
$b \to \langle \eta_b \rangle_{g_b}$ yields a non-vanishing section of 
$\calh$. Then, associated to such a section, there is a map:
\[
B \to \mathbf{K} - \left\{ 0 \right\},\quad b \to [\langle \eta_b \rangle_{g_b}],
\]
where the brackets $[\phantom{\eta}]$ signify the cohomology 
class of $\langle \eta_b \rangle$. Since both $B$ and $\mathbf{K}$ are homotopy $S^2$, this map 
has a degree, called the winding number of the family 
$(g_b,\eta_b)$. 
\begin{lemma}\label{l:wind-number}
Suppose that the winding number of $(g_b,\eta_b)$ vanishes. Then 
\begin{equation}\label{eq:wind-family}
\fsw_{(g_b,\lambda \eta_b)}(\fr{s}) = \fsw_{(g_b,-\lambda \eta_b)}(\fr{s})
\end{equation}
for $\lambda$ sufficiently large.
\end{lemma}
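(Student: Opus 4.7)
The strategy is to construct a continuous homotopy $\{\tilde\eta_b^t\}_{t \in [0,1]}$ of $g_b$-self-dual $2$-forms with $\tilde\eta_b^0 = \eta_b$, $\tilde\eta_b^1 = -\eta_b$, and $\langle \tilde\eta_b^t \rangle_{g_b} \neq 0$ for every $(b,t) \in B \times [0,1]$. Once such a homotopy is in hand, for $\lambda$ sufficiently large the scaled family $\{(g_b, \lambda \tilde\eta_b^t)\}$ will satisfy the irreducibility condition \eqref{eq:irreducible} uniformly in $(b,t)$, and invoking property 3 from Section \ref{sw-family} (homotopy invariance of the family Seiberg-Witten invariants) will yield \eqref{eq:wind-family}.

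To produce the homotopy I would first work at the level of cohomology. The winding-number hypothesis says that the map $f \colon B \to \mathbf{K}$, $b \mapsto [\langle \eta_b \rangle_{g_b}]$, has zero degree; since $B \simeq \mathbf{K} \simeq S^2$, this forces $f$ to be null-homotopic. The involution $v \mapsto -v$ on $\mathbf{K}$ has degree $-1$, so $-f$ is null-homotopic as well, and hence $f$ is homotopic to $-f$ through maps $B \to \mathbf{K}$. Fix any such homotopy $\{\kappa_t\}_{t \in [0,1]}$. For each $(b,t)$, the $g_b$-self-dual projection $\kappa_t(b)^+_{g_b} \in H^2_+(X_b; g_b)$ is non-zero, because $\kappa_t(b)^2 > 0$ while the intersection form is negative definite on $H^2_-(X_b;g_b)$. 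Let $h_b^t$ denote the $g_b$-harmonic $2$-form representing $\kappa_t(b)^+_{g_b}$.

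The family $\{h_b^t\}$ is then a continuous family of non-vanishing $g_b$-self-dual harmonic forms with $h_b^0 = \langle\eta_b\rangle_{g_b}$ and $h_b^1 = -\langle\eta_b\rangle_{g_b}$. I would concatenate it with the two straight-line interpolations from $\eta_b$ to $\langle\eta_b\rangle_{g_b}$ and from $-\langle\eta_b\rangle_{g_b}$ to $-\eta_b$; both of these auxiliary paths preserve the harmonic part, so $\langle\tilde\eta_b^t\rangle_{g_b}$ remains non-vanishing along the entire concatenated family. Compactness of $B \times [0,1]$ then produces a uniform lower bound on $\|\langle\tilde\eta_b^t\rangle_{g_b}\|$, so that for $\lambda$ large enough $\lambda\langle\tilde\eta_b^t\rangle_{g_b}$ dominates $\langle 2\pi c_1(\call_b)\rangle_{g_b}$ and \eqref{eq:irreducible} holds throughout, completing the argument. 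The main subtlety will be promoting the homotopy of cohomology classes to one of honest self-dual $2$-forms whose harmonic part never vanishes; the sign-definiteness of the intersection form on $H^2_\pm$ is precisely what makes the self-dual projection $\kappa \mapsto \kappa^+_{g_b}$ a suitable bridge between the two.
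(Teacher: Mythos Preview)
Your proposal is correct and follows essentially the same approach as the paper: use the vanishing winding number to homotope the map $b\mapsto[\langle\eta_b\rangle_{g_b}]$ to its negative inside $\mathbf{K}$, lift this homotopy to non-vanishing self-dual harmonic forms, and invoke homotopy invariance of $\fsw$ after scaling. The only cosmetic differences are that the paper scales by $\lambda$ first (so as to homotope within $\mathbf{K}-O$) and then lifts, whereas you homotope first via explicit self-dual projection and then scale---your treatment of the lift is in fact slightly more careful than the paper's.
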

\begin{proof} 
By choosing $\lambda$ large enough, we can make 
\begin{equation}\label{eq:cutoff}
\lambda^2\, \min_{b \in B} \int_{X_b} \langle \eta_b \rangle_{g_b}^2 
> 4\,\pi^2 \max_{b \in B} 
\int_{X_b} \langle c_1(\fr{s}_b) \rangle_{g_b}^2,
\end{equation}
so that both $(g_b,\lambda \eta_b)$ and $(g_b,-\lambda \eta_b)$ satisfies 
\eqref{eq:irreducible} for $\lambda$ large enough, and both 
sides of \eqref{eq:wind-family} are well defined. Let us show 
that there exists a homotopy between 
$\left\{ (g_b,\lambda \eta_b) \right\}_{b \in B}$ and 
$\left\{ (g_b,-\lambda \eta_b) \right\}_{b \in B}$ that satisfies 
\eqref{eq:irreducible}. 
\smallskip%

\noindent
To begin with, we can assume that $\eta_b = \langle \eta_b \rangle_{g_b}$ 
for each $b \in B$. This can be assumed because:
\begin{center}
    If $\eta_b$ satisfies \eqref{eq:irreducible}, then so does 
    $\eta_b + \text{Image}\,\text{d}^{+}$.
\end{center}
If \eqref{eq:cutoff} holds, then the range of both maps
\begin{equation}\label{map:mymaps}
b \to \lambda [\eta_b],\quad b \to -\lambda [\eta_b]
\end{equation}
lies in the complement of the ball $O \subset \mathbf{K}$, 
\begin{equation}\label{ball-O}
O = \left\{ \kappa \in \mathbf{K}\ |\ \kappa^2 < 4\,\pi^2\,\max_{b \in B} 
\langle c_1(\fr{s}_b) \rangle^{2}_{g_b} \right\}.
\end{equation}
For every map $\chi \colon B \to \mathbf{K}$, there exists a unique 
section $\tilde{\chi} \colon B \to \calh$ such that the diagram
\[
\begin{tikzcd}
\calh \arrow{dr}{[\phantom{\eta}]}  & \\
B \arrow{r}{\chi} \arrow{u}{\tilde{\chi}} & \mathbf{K}
\end{tikzcd}
\]
is commutative. If the range of $\chi$ is contained in $\mathbf{K} - O$, then 
$\tilde{\chi}(b)$ satisfies \eqref{eq:irreducible} for each 
$b \in B$. To conclude the proof, it suffices to show 
that the maps \eqref{map:mymaps} are homotopic as maps from $B$ 
to $\mathbf{K} - O$. Since $\mathbf{K} - O$ is a homotopy $S^2$, 
the maps \eqref{map:mymaps} are homotopic iff their degrees are 
equal to each other. This is the case, as the winding number of 
$(g_b,\pm \lambda \eta_b)$ is equal to that of $(g_b,\pm \eta_b)$, 
and the latter is zero. 
\end{proof}
\smallskip%

\noindent
Combining \eqref{eq:wind-family} and \eqref{eq:charge-family}, we obtain 
\begin{equation}\label{eq:lin-symmetry}
\fsw_{(g_b,\lambda \eta_b)}(-\fr{s}) = \fsw_{(g_b,\lambda \eta_b)}(\fr{s})\quad 
\text{for $\lambda$ sufficiently large.}
\end{equation}
\qed
\section{Seiberg-Witten for symplectic manifolds}
The following material is well-known; 
see, e.g., \cite[\S\,3.3]{Nic}, \cite[Ch.\,7]{Morg} for details. On a symplectic 
4-manifold $(X,\omega)$ endowed with a compatible almost-complex 
structure $J$ and the associated Hermitian metric 
$g(\cdot,\cdot) = \omega(\cdot,J\cdot)$, each spin$^{\cc}$ structure has the following form:
\begin{equation}\label{spin-eps}
W^{+} = L_{\varepsilon} \oplus \left( \Lambda^{0,2} \otimes L_{\varepsilon} \right),\quad 
W^{-} = \Lambda^{0,1} \otimes L_{\varepsilon},
\end{equation}
where $L_{\varepsilon}$ is a line bundle on $X$ with 
$c_1(L_{\varepsilon}) = \varepsilon \in H^2(X;\zz)$. $K^{*}_X$ denotes the 
anticanonical bundle of $X$. We parameterize all connections 
on $\call = K_{X}^{*} \otimes L_{\varepsilon}^2$ as $A = A_0 + 2\,B$, where $B$ is a $\mib{U}(1)$-connection on $L_{\varepsilon}$ and $A_0$ is the Chern connection on 
$K_{X}^{*}$. 
We also write $\varphi = (\ell,\beta)$ for $\varphi \in W^{+}$. Following Taubes, we 
choose the perturbation
\begin{equation}\label{eq:taubes-eta}
    i\,\eta = F^{+}_{A_0} - i\, \rho\, \omega.
\end{equation}
Note that $\omega$ is $g$-self-dual and of type $(1,1)$ with respect to $J$. 
The Seiberg-Witten equations are:
\begin{equation}\label{eq:sw-eq-sympl}
 \begin{cases}
   \bar{\del}_{B} \ell + \bar{\del}_{B}^{*} \beta = 0, 
   \\
   F^{0,2}_{A_0} + 2\,F^{0,2}_{B} = \dfrac{\ell^{*} \beta}{2} + i \eta^{0,2},
   \\
   (F_{A_0}^{+})^{1,1} + 2 (F^{+}_{B})^{1,1} = \dfrac{i}{4}\,(|\ell|^2 - |\beta|^2) \omega + i \eta^{1,1},
 \end{cases}
\end{equation}
\begin{theorem}[Taubes, \cite{Taub-2}]\label{thm:taubes}
Suppose that 
\[
\varepsilon \neq 0\quad\text{and}\quad \int_X \varepsilon \cup \omega \leq 0.
\]
Then the equations \eqref{eq:sw-eq-sympl} with 
the perturbing term \eqref{eq:taubes-eta} have no solutions for 
$\rho$ positive sufficiently large.
\end{theorem}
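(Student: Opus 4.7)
The plan is Taubes' classical vanishing argument: derive an integral lower bound and a pointwise upper bound on $|\ell|^{2}$ that are incompatible for $\rho$ large.

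First I will substitute the perturbation \eqref{eq:taubes-eta} into \eqref{eq:sw-eq-sympl}. The $(1,1)$-component of the curvature equation becomes
\[
2\,(F^{+}_{B})^{1,1} \;=\; \tfrac{i}{4}\bigl(|\ell|^{2} - |\beta|^{2}\bigr)\,\omega \;-\; i\rho\,\omega.
\]
Wedging with $\omega$, integrating over $X$, and invoking Chern--Weil for the $\mib{U}(1)$-connection $B$ on $L_{\varepsilon}$ (so that $\int_{X} F_{B}\wedge\omega = -2\pi i\,\langle\varepsilon,[\omega]\rangle$), I obtain the identity
\[
\rho\int_{X}\omega\wedge\omega \;=\; \tfrac{1}{4}\int_{X}\bigl(|\ell|^{2} - |\beta|^{2}\bigr)\,\omega\wedge\omega \;+\; 4\pi\int_{X}\varepsilon\cup\omega.
\]
The hypothesis $\int_{X}\varepsilon\cup\omega \le 0$ upgrades this to the integral lower bound
\[
\int_{X} |\ell|^{2}\,\omega\wedge\omega \;\ge\; 4\rho\int_{X}\omega\wedge\omega. \qquad (*)
\]

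Next I will establish a pointwise bound of the form $|\ell|^{2} \le 2\rho + C_{0}$, with $C_{0}$ independent of $\rho$, by applying the Weitzenb\"ock formula $\cald_{A}^{2} = \nabla_{A}^{*}\nabla_{A} + \tfrac{s}{4} + \tfrac{1}{2}F_{A}^{+}\!\cdot$ to a zero of $\cald_{A}$ and running the maximum principle on the function $|\ell|^{2}$. Substituting the curvature equations from \eqref{eq:sw-eq-sympl} for $F_{A}^{+}$, using the standard identity $\langle\sigma(\varphi)\cdot\varphi,\varphi\rangle = \tfrac{1}{2}|\varphi|^{4}$, and invoking the diagonal action of $\omega$ on $W^{+}$ in the splitting \eqref{spin-eps} (of opposite sign on the two summands), the Weitzenb\"ock identity at a maximum of $|\ell|^{2}$ should reduce to an algebraic inequality of the shape
\[
0 \;\ge\; |\ell|^{2}\bigl(|\ell|^{2} - 2\rho - C_{0} - |\beta|^{2}\bigr).
\]
The required bound then follows once the auxiliary estimate $\|\beta\|_{C^{0}}^{2} = O(\rho^{-1})$ is in hand; this is obtained by the same maximum-principle machine applied to $|\beta|^{2}$, where the Clifford action of $\omega$ has the opposite sign and forces $\beta$ to be small.

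Combining $(*)$ with the pointwise bound yields
\[
4\rho\int_{X}\omega\wedge\omega \;\le\; \int_{X}|\ell|^{2}\,\omega\wedge\omega \;\le\; (2\rho + C_{0})\int_{X}\omega\wedge\omega,
\]
which gives $2\rho \le C_{0}$ and fails for $\rho$ large. The main obstacle is clearly the pointwise estimate on $|\ell|^{2}$: extracting the sharp leading coefficient $2$ (rather than $4$, which would not suffice for the contradiction) requires careful bookkeeping of the Clifford action of $\omega$ on the two summands of $W^{+}$ and of the curvature of $L_{\varepsilon}$ in the Weitzenb\"ock formula. The hypothesis $\varepsilon\ne 0$ enters precisely here, to exclude the anti-canonical monopole $\beta\equiv 0$, $|\ell|^{2}\equiv 4\rho + O(1)$ that would otherwise saturate the bound.
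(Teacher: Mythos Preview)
The paper gives no argument here; it simply refers the reader to Theorem~3.3.29 in \cite{Nic}. Your outline is the classical Taubes strategy, which is indeed what that reference contains, so in intent you are aligned with the paper.

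However, your plan has a genuine gap precisely at the step you flag. The maximum-principle bound on $|\ell|^{2}$ that the Weitzenb\"ock identity produces has leading coefficient $4\rho$, not $2\rho$: the Clifford action of $-i\rho\,\omega$ on the $L_{\varepsilon}$-summand is multiplication by $-2\rho$, and feeding this through the quartic term gives $|\ell|^{2}\le 4\rho+O(1)$. The canonical monopole at $\varepsilon=0$ has $|\ell|^{2}\equiv 4\rho$ on the nose, so the constant $4$ is sharp. With coefficient $4$, comparison with your inequality $(*)$ yields only $4\rho\le 4\rho+C_{0}$, which is no contradiction. Crucially, the hypothesis $\varepsilon\ne 0$ is a global cohomological condition and cannot improve a \emph{pointwise} differential inequality; it does not exclude anything at the level of the maximum principle. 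Taubes' actual route does not pass through a sharper $C^{0}$ bound on $\ell$. One instead works with the \emph{integrated} Weitzenb\"ock identity, keeping the gradient term $\int_{X}|\nabla_{A}\varphi|^{2}$, substitutes your curvature identity together with the $L^{2}$ estimate $\rho\int_{X}|\beta|^{2}\le C$ (which does follow from the $\beta$-equation as you indicate), and finds that the $\rho^{2}$-contributions cancel; what remains is an inequality in which the sign of $\langle\varepsilon,[\omega]\rangle$ is decisive, and $\varepsilon\ne 0$ enters only at the end to rule out the limiting configuration where every estimate is an equality.
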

\begin{proof}
See Theorem 3.3.29 in \cite{Nic}. \qed
\end{proof}
\medskip%

\noindent
When $(X,\omega)$ is K{\"a}hler we have the following result: 
Set 
\[
\rho_0 = 4 \pi \left( \int_X \varepsilon \cup \omega \right)
\left( \int_X \omega \cup \omega \right)^{-1}.
\]
\begin{theorem}\label{t:kahler}
Let $\eta$ be as in \eqref{eq:taubes-eta}. 
If $\varepsilon \centernot\in H^{1,1}(X;\rr)$, then the equations \eqref{eq:sw-eq-sympl} have no solutions. If $\varepsilon \in H^{1,1}(X;\rr)$ and $\rho > \rho_0$, then 
solutions to \eqref{eq:sw-eq-sympl} are irreducible and, modulo gauge transformations, are in one-to-one correspondence with the set of effective
divisors in the class $\varepsilon$.
\end{theorem}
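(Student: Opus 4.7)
The plan is to follow Witten's classical analysis of the Seiberg--Witten equations on a K\"ahler surface, as presented in \cite[\S 3.3]{Nic} or \cite[Ch.\,7]{Morg}. The key observation is that, since $\omega$ has type $(1,1)$, the Taubes perturbation \eqref{eq:taubes-eta} satisfies $i\eta^{0,2}=F^{0,2}_{A_0}$ and $i\eta^{1,1}=(F^{+}_{A_0})^{1,1}-i\rho\omega$, so \eqref{eq:sw-eq-sympl} reduces (up to universal constants) to
\[
\bar\partial_B\ell+\bar\partial_B^{*}\beta=0,\qquad F_B^{0,2}=\tfrac{1}{4}\ell^{*}\beta,\qquad i\Lambda F_B=\tfrac{1}{4}\bigl(|\beta|^2-|\ell|^2\bigr)+\rho.
\]

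The main analytic input is the Witten $L^2$-identity. Squaring the Dirac equation and integrating gives
\[
0=\int_X\bigl(|\bar\partial_B\ell|^2+|\bar\partial_B^{*}\beta|^2\bigr)+2\,\mathrm{Re}\!\int_X\!\langle\bar\partial_B\ell,\bar\partial_B^{*}\beta\rangle,
\]
and integration by parts rewrites the cross term as $2\,\mathrm{Re}\int_X\langle\bar\partial_B^{2}\ell,\beta\rangle=2\,\mathrm{Re}\int_X\langle F_B^{0,2}\cdot\ell,\beta\rangle=\tfrac12\int_X|\ell|^2|\beta|^2$ using the second equation. Each of the three resulting non-negative terms must vanish; in particular $|\ell|\,|\beta|\equiv 0$ pointwise, and since $\ell$ is holomorphic (from $\bar\partial_B\ell=0$) while $\beta$ is $\Delta_{\bar\partial}$-harmonic (from $\bar\partial_B^{*}\beta=0$ together with the automatic $\bar\partial_B\beta=0$), standard unique continuation forces either $\ell\equiv 0$ or $\beta\equiv 0$ globally on $X$.

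If $\varepsilon\notin H^{1,1}(X;\rr)$, the Dolbeault class $[F_B^{0,2}]$ is a nonzero multiple of $\varepsilon^{0,2}\neq 0$, so $F_B^{0,2}\not\equiv 0$; but the dichotomy above, together with the second equation, gives $F_B^{0,2}\equiv 0$, a contradiction, so the equations admit no solutions. If instead $\varepsilon\in H^{1,1}(X;\rr)$ and $\rho>\rho_0$, integrating the $(1,1)$ equation against $\omega$ yields
\[
\rho\int_X\omega^2-4\pi\int_X\varepsilon\cup\omega=\tfrac{1}{4}\int_X\bigl(|\ell|^2-|\beta|^2\bigr)\,\omega^2,
\]
and by the definition of $\rho_0$ the left-hand side is strictly positive, excluding $\ell\equiv 0$. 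Hence $\beta\equiv 0$; the remaining equations $\bar\partial_B\ell=0$ and $F_B^{0,2}=0$ say that $\bar\partial_B$ endows $L_\varepsilon$ with an integrable holomorphic structure and that $\ell$ is a nontrivial global holomorphic section, and the solution is automatically irreducible because $\ell\not\equiv 0$.

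The bijection with effective divisors sends $(\ell,B)$ to the zero divisor of $\ell$. For the converse, an effective divisor $D\in|\varepsilon|$ determines (uniquely up to isomorphism) a holomorphic line bundle with $c_1=\varepsilon$, smoothly isomorphic to $L_\varepsilon$, together with a defining section $s$; the Hermitian metric on $L_\varepsilon$ is then pinned down by the remaining $(1,1)$ equation, which becomes a scalar Kazhdan--Warner equation for $\log|s|$ uniquely solvable modulo constants precisely when $\rho>\rho_0$, and the residual $U(1)$ gauge absorbs the phase ambiguity. The principal technical ingredients are thus unique continuation for the coupled Dirac operator and solvability of the Kazhdan--Warner equation in the K\"ahler setting; with those in hand every remaining step is essentially formal.
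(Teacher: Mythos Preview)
Your sketch is correct and is precisely the classical Witten argument that the paper defers to by citing \cite[Ch.\,7]{Morg}; the paper itself gives no proof beyond that reference, so there is nothing further to compare. One cosmetic remark: you invoke holomorphicity of $\ell$ to run unique continuation before you have established $F_B^{0,2}=0$, but this is harmless since $\bar\partial_B\ell=0$ is already a Cauchy--Riemann type elliptic equation regardless of integrability, and in any case $F_B^{0,2}=\tfrac14\ell^{*}\beta=0$ follows immediately from the pointwise vanishing $|\ell|\,|\beta|\equiv 0$.
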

\begin{proof}
See \cite[Ch.\,7]{Morg}. \qed
\end{proof}
\section{The homomorphism $q$}
Consider the following 
fibration, introduced in \cite{K} and studied in \cite{McD}:
\begin{equation}\label{mcduff-fibr}
\symp(X,\omega) \to \diff(X) \xrightarrow{\psi \to (\psi^{-1})^{*} \omega } S_{[\omega]},
\end{equation}
where $\symp(X,\omega)$ is the symplectomorphism group of $(X,\omega)$, 
$\diff(X)$ the diffeomorphism group of $X$, and $S_{[\omega]}$ is 
the space of those symplectic forms which can be joined with $\omega$ 
through a path of cohomologous symplectic forms. We first recall 
the construction of Kronheimer's homomorphism \cite{K}:
\[
Q \colon \pi_1(S_{[\omega]}) \to \zz_2,
\] 
and then define the homomorphism $q$ afterwards. Kronheimer's original 
construction restricts to the case of $b^{+}_{2}(X) > 3$, and a mild refinement of 
his argument is given here in order to deal with $b^{+}_{2}(X) = 3$.

Let $\left\{ \omega_t \right\}_{t \in S^1}$ be a loop in $S_{[\omega]}$. 
$\left\{ \omega_t \right\}_{t \in S^1}$ can always be equipped with 
a family of $\omega_t$-compatible almost-complex structures 
$\left\{ J_t \right\}_{t \in S^1}$ on $X$. This follows from the fact 
that the space of compatible almost-complex structures is non-empty and contractible; see, e.g., \cite[Prop.\,4.1.1]{McD-Sa-2}. We let $\left\{ g_t \right\}_{t \in S^1}$ be the associated family of Hermitian metrics on $X$. 
\smallskip%

\noindent
Let $\calx$ be a trivial bundle over the 2-disc $D$ with fiber $X$. 
Let $\left\{ g_b \right\}_{b \in D}$ be a family of fiberwise metrics 
on $\calx$, providing a nullhomotopy of the family 
$\left\{ g_t \right\}_{t \in S^1}$ in the space of all Riemannian metrics on $X$. 
Pick a class $\varepsilon \in H^2(X;\zz)$ that satisfies:
\begin{equation}\label{e:eps-cond}
\int_X \varepsilon \cup \omega = 0,\quad 
\int_X \varepsilon \cup \varepsilon = -2.
\end{equation}
These include, for examples, those classes represented by smooth Lagrangian spheres in $(X,\omega)$. 
Let $\fr{s}_{\varepsilon}$ be the spin$^{\cc}$ structure on $X$ given 
by \eqref{spin-eps}. We have $c_1(\fr{s}_{\varepsilon}) = c_1(X) + 2\,\varepsilon$. Choose a spin$^{\cc}$ structure on $T_{\calx/D}$ extending $\fr{s}_{\varepsilon}$. We shall use $\fr{s}_{\varepsilon}$ to denote this spin$^{\cc}$ structure also.
\smallskip%

\noindent
As in \eqref{ball-O}, set:
\[
O = \left\{ \kappa \in \mathbf{K}\ |\  \kappa^2 < 4\,\pi^2\,\max_{t \in S^1} 
\langle c_1(\fr{s}_{\varepsilon}) \rangle^{2}_{g_t} \right\}. 
\]
Let ${A_0}_t$ denote the Chern connection on $K^{*}_X$ determined by $g_t$. As in \eqref{eq:taubes-eta}, set:
\begin{equation}\label{eta-t-gamma}
\eta_{t} = -i F_{{A_0}_t}^{+} - \rho\, \omega_t.
\end{equation}
Choosing $\rho$ large enough, we can assume that
\[
[\langle \eta_t \rangle_{g_t}] \in \mathbf{K} - O\quad\text{for each $t \in S^1$.}
\]
Note that $\mathbf{K} - O$ has the homotopy type of the sphere $S^{b_{2}^{+}(X) - 1}$; hence, $\pi_i(\mathbf{K} - O) = 0$ for $i < b_{2}^{+}(X) - 1$.

Let $\left\{ \eta_b \right\}_{b \in D}$ be a family of fiberwise 
$g_b$-self-dual forms on $\calx$ that agree with $\eta_t$ on $\del D$. We call 
$\left\{ \eta_b \right\}_{b \in D}$ 
an admissible extension of 
$\left\{ \eta_t \right\}_{t \in S^1}$ if
\begin{equation}\label{K-cond}
[\langle \eta_b \rangle_{g_b}] \in \mathbf{K} - O\quad\text{for each $b \in D$.}
\end{equation}
If $b_2^{+}(X) > 2$, then $\pi_1(\mathbf{K} - O) = 0$ and an admissible extension always exists. Moreover, if 
$b_2^{+}(X) > 3$, an admissible 
extension is essentially unique: Suppose we are given 
another admissible extension $\left\{ \eta'_b \right\}_{b \in D}$ 
of $\left\{ \eta_t \right\}_{t \in S^1}$. 
Using the fact that $\pi_2(\mathbf{K} - O) = 0$ and then arguing as in the proof 
of Lemma \ref{l:wind-number}, one shows that there exists a homotopy 
$\left\{ \eta_b^{s} \right\}_{b \in D}$ from 
$\left\{ \eta_b \right\}_{b \in D}$ to 
$\left\{ \eta'_b \right\}_{b \in D}$ that agrees with 
$\left\{ \eta_t \right\}_{t \in S^1}$ at each stage 
and such that $[\langle \eta_b^s \rangle_{g_b}] \in \mathbf{K} - O$.
\smallskip%

Fix an admissible extension $\left\{ \eta_b \right\}_{b \in D}$ of 
$\left\{ \eta_t \right\}_{t \in S^1}$. By \eqref{K-cond},
\begin{equation}\label{fortune}
\langle \eta_b \rangle_{g_b} + 2 \pi \langle c_1(\fr{s}_{\varepsilon}) \rangle_{g_b} \neq 0\quad \text{for each $b \in D$.}
\end{equation}
Now we consider the Seiberg-Witten equations parametrized by the 
family $\left\{ (g_b, \eta_b) \right\}_{b \in D}$. 
By \eqref{fortune}, for all $b \in B$, these equations have no reducible solutions. 
By Theorem \ref{thm:taubes}, for $\rho$ large enough, it is true that
\[
\pi^{-1}_{\fr{s}_{\varepsilon}} (g_t,\eta_t) = \emptyset\quad 
\text{for all $t \in S^1$.} 
\]
Here, following the notation of \S\,\ref{sw-family}, we let $\pi^{-1}_{\fr{s}_{\varepsilon}} (g_t,\eta_t)$ stand for the moduli space of solutions of the Seiberg-Witten equations parameterized by $(g_t,\eta_t)$. 
\smallskip%

Now the relative version of Sard-Smale theorem 
is applied: By 
perturbing $\left\{ \eta_b \right\}_{b \in D}$, we can assume 
that the moduli space $\fr{M}^{\fr{s}_{\varepsilon}}_{(g_b,\eta_b)}$, lying 
over $D$, is a manifold of dimension $d(\fr{s}_{\varepsilon}, D) = 0$. 
Now set: 
\[
Q_{\varepsilon}(\left\{ \omega_t \right\}_{t \in S^1}) = 
\# \left\{ \text{points of $\fr{M}_{(g_b,\eta_b)}^{\fr{s}_{\varepsilon}}$} \right\}\,\mod\,2.
\]
This gives an element of $\zz_2$ depending only on the homotopy class of 
$\left\{ \omega_t \right\}_{t \in S^1}$ but not on our choice of an admissible extension. Thus, $Q_{\varepsilon}$ gives a group homomorphism $\pi_1(S_{[\omega]}) \to \zz_2$. 
\smallskip%

One can extend the above definition of $Q$ to the case of $b_{2}^{+}(X) = 3$. Letting 
\[
N_{\omega} = \left\{ \kappa \in \mathbf{K}\ |\ 
\int_X \kappa \cup \omega = 0 \right\},
\]
the complement of $N_{\omega}$ in $\mathbf{K}$ has two connected components 
$\mathbf{K}^{\pm}$, each being contractible; the component $\mathbf{K}^{+}$ is specified by the condition $[\omega] \in \mathbf{K}^{+}$. With $\eta_{t}$ as in \eqref{eta-t-gamma}, we choose $\rho$ large enough so that 
$[\langle \eta_t \rangle_{g_t}] \in \mathbf{K} - O$ for each $t \in S^1$. 
Observe that $\langle -i F_{{A_0}_t}^{+} \rangle_{g_t} = 0$ because 
$K_{X}^{*}$ is topologically trivial. Thus $\langle \eta_t \rangle_{g_t} = -\rho \langle \omega_t \rangle_{g_t}$, and we have the inequality: 
\[
\int_X \langle \eta_t \rangle_{g_t} \wedge \omega_t < 0,\quad \text{and thus}\quad
   [\langle \eta_t \rangle_{g_t}] \in \mathbf{K}^{-} - O\quad\text{for each $t \in S^1$.}
\]
An admissible extension of $\left\{ \eta_t \right\}_{t \in S^1}$ 
is now defined as follows: An extension $\left\{ \eta_b \right\}_{b \in D}$ is 
admissible if it satisfies 
\[
[\langle \eta_b \rangle_{g_b}] \in \mathbf{K}^{-} - O\quad\text{for each $b \in D$.}
\]
Since $\mathbf{K}^{-} - O$ is contractible, an admissible extension exists and it is unique 
up to homotopy. The rest of the definition of $Q$ goes just as before.
\smallskip%

Note that if $\varepsilon$ satisfies \eqref{e:eps-cond}, then so does 
$(-\varepsilon)$. Define 
$q_{\varepsilon} \colon \pi_1(S_{[\omega]}) \to \zz_2$ as:
\begin{equation}\label{q}
q_{\varepsilon} = Q_{\varepsilon} - Q_{-\varepsilon}.
\end{equation}
\begin{lemma}\label{q-to-K}
The composite homomorphism
\[
\pi_1 \diff(X) \to \pi_1(S_{[\omega]}) \xrightarrow{q_{\varepsilon}} \zz_2
\]
is a nullhomomorphism.
\end{lemma}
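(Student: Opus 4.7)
The plan is to prove the lemma by producing, for each loop $\{\psi_t\}_{t \in S^1} \subset \diff(X)$ based at the identity, an explicit admissible disk filling of $\omega_t = (\psi_t^{-1})^{*}\omega$ on which the family Seiberg-Witten moduli spaces are empty fiberwise, so that both $Q_{\varepsilon}$ and $Q_{-\varepsilon}$ vanish on the class of $\{\omega_t\}$ in $\pi_1(S_{[\omega]})$. The guiding idea is to transport the fiberwise K\"ahler data by the diffeomorphism loop itself, reducing the vanishing to Theorem \ref{thm:taubes} applied on each fiber.

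First I would construct a smooth extension $\psi_{\bullet} \colon D \to \diff(X)$ with $\psi_b|_{\del D} = \psi_t$. The naive cone $(s,t) \mapsto \psi_{st}$ fails to be smooth at the center of $D$, so I would instead set $\psi_{(s,t)} = \psi_{\beta(s)t}$, where $(s,t) \in [0,1] \times S^1$ parametrizes $D$ via collapse of $\{0\} \times S^1$ and $\beta \colon [0,1] \to [0,1]$ is a smooth function with $\beta \equiv 0$ near $s=0$ and $\beta(1) = 1$. The resulting $\psi_b$ is smooth on $D$ and identically $\id$ in a neighborhood of the center. Then pull back the fiberwise data: $g_b = (\psi_b^{-1})^{*}g_0$, $\omega_b = (\psi_b^{-1})^{*}\omega$, $J_b = (\psi_b^{-1})^{*}J_0$, and the Taubes perturbation $\eta_b = -iF^{+}_{A_{0,b}} - \rho\omega_b = (\psi_b^{-1})^{*}\eta_0$; these agree on $\del D$ with the Kronheimer boundary data.

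Admissibility follows because the continuous map $b \mapsto \psi_b^{*}$ from the connected set $D$ into the discrete group $\aut(H^2(X;\zz))$ is constant and equals $\id$ at the center; hence $\psi_b^{*} = \id$ on $H^2$ for every $b \in D$. In particular $[\omega_b] = [\omega]$ throughout $D$, and since the canonical bundle of K3 is trivial, $[\langle \eta_b \rangle_{g_b}] = -\rho[\omega]$, which for $\rho$ sufficiently large lies in $\mathbf{K}^{-} - O$. So $\{\eta_b\}_{b \in D}$ is an admissible extension in the sense of \eqref{K-cond}. Moreover, since each $(X,\omega_b,J_b)$ is K\"ahler and $\eta_b$ is the Taubes perturbation, Theorem \ref{thm:taubes} applies fiberwise to both $\fr{s}_{\varepsilon}$ and $\fr{s}_{-\varepsilon}$: the hypotheses $\int_X (\pm\varepsilon) \cup \omega \leq 0$ both reduce to the standing equality $\int_X \varepsilon \cup \omega = 0$. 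For $\rho$ large uniformly in $b$, the unperturbed family moduli space is therefore empty.

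Finally, to actually evaluate $Q_{\pm\varepsilon}$ one must pass to a generic admissible perturbation of $\{\eta_b\}$. The non-existence in Taubes's argument is quantitative in $\rho$ --- it comes from a strict Weitzenb\"ock inequality --- so a sufficiently $C^{0}$-small generic perturbation of $\{\eta_b\}$ still produces no solutions on any fiber. The transverse family moduli space is then empty, giving $Q_\varepsilon(\{\omega_t\}) = Q_{-\varepsilon}(\{\omega_t\}) = 0$ and hence $q_\varepsilon = 0$ on the image of $\pi_1 \diff(X)$. The main obstacle I anticipate is precisely the smoothness of the disk extension $\psi_{\bullet}$ at the center of $D$, handled by the flat bump function $\beta$; a secondary point is verifying that achieving transversality does not resurrect solutions, for which the quantitative character of Taubes's estimate is essential.
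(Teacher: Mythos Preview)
Your argument has a fatal gap at its very first step. You claim to construct an extension $\psi_{\bullet}\colon D\to\diff(X)$ of the loop $\{\psi_t\}_{t\in S^1}$; but the existence of such an extension is exactly the statement that $[\psi_t]=0$ in $\pi_1\diff(X)$. For a general class in $\pi_1\diff(X)$ no such filling exists, so your argument only treats the trivial element. Your explicit formula $\psi_{(s,t)}=\psi_{\beta(s)t}$ does not fix this: with $t$ ranging over $S^1=[0,1]/(0\!\sim\!1)$, the map descends to $D$ only if $\psi_{\beta(s)\cdot 0}=\psi_{\beta(s)\cdot 1}$, i.e.\ $\psi_{\beta(s)}=\id$ for all $s$, which forces the loop to be constant. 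The smoothness issue at the center is not the obstacle; the homotopy-theoretic one is.

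The paper avoids this by never trying to fill $\{\psi_t\}$ inside $\diff(X)$. Instead it uses the loop $\{f_t\}$ of symplectomorphisms $(X,\omega_t)\to(X,\omega)$ as a clutching function to build a genuine (generally nontrivial) $X$-bundle $\caly$ over $S^2$. The disk data $\{(g_b,\eta_b)\}_{b\in D}$ from the definition of $Q_{\pm\varepsilon}$ then glue with the constant $(g,\eta)$ on the other hemisphere to give a family over $S^2$, so that $q_\varepsilon(\{\omega_t\})=\fsw(\fr{s}_\varepsilon)-\fsw(\fr{s}_{-\varepsilon})$. Since $c_1(\fr{s}_{-\varepsilon})=c_1(-\fr{s}_\varepsilon)$ fiberwise, this equals $\fsw(\fr{s}_\varepsilon)-\fsw(-\fr{s}_\varepsilon)$; and because the admissibility condition forces $[\langle\eta_b\rangle]\in\mathbf{K}^-$ for all $b$, the winding number of the family vanishes and Lemma~\ref{l:wind-number} (equation~\eqref{eq:lin-symmetry}) gives the desired equality. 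The essential content is thus the charge-conjugation symmetry over a closed base with zero winding number, not a fiberwise emptiness argument.
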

\begin{proof}
Assume that there is a family of symplectomorphisms
\[
f_t \colon (X,\omega_t) \to (X,\omega)\quad \text{for $t \in \del D$.}
\]
Via the clutching construction, the family 
$\left\{ f_t \right\}_{t \in \del D}$ 
corresponds to the quotient space:
\[
\caly = \calx \cup X/\sim,\quad \text{where $(t,x) \sim f_t(x)$ for each $t \in \del D$ and $x \in X$,}
\]
which is a fiber bundle over the 2-sphere $B = D/\del D$. 
Pick an $\omega$-compatible almost-complex structure $J$ on $X$. 
Let $g$ be the associated Hermitian metric. 
Now let $J_t = (f_t^{-1})_* \circ J\circ (f_t)_*$, 
$g_t = g \circ (f_t)_{*}$. Then, there is a $g$-self-dual form 
$\eta$ on $X$ such that: 
\[
(f^{-1}_t)^{*} \eta_t = \eta \quad 
\text{for each $t \in \del D$.}
\]
Let $\left\{ g_b \right\}_{b \in D}$ be a family of 
Riemannian metrics on $X$ that agree with $\left\{ g_t \right\}_{t \in \del D}$ 
at each $t \in \del D$. We repeat the above construction of the family 
$\left\{ \eta_b \right\}_{b \in D}$, and observe that we get 
a family $\left\{ (g_b,\eta_b) \right\}_{b \in B}$ on $\caly$. 
By definition, we have
\[
q_{\varepsilon}(\left\{ \omega_t \right\}_{t \in S^1}) = 
\fsw_{(g_b, \eta_b)}(\fr{s}_{\varepsilon}) - 
\fsw_{(g_b, \eta_b)}(\fr{s}_{-\varepsilon}). 
\]
The Chern classes $c_1(\fr{s}_{-\varepsilon})$ 
and $c_1(-\fr{s}_{\varepsilon})$ 
are equal to each other, when restricted to $X_b$, and hence:
\[
q_{\varepsilon}(\left\{ \omega_t \right\}_{t \in S^1}) = 
\fsw_{(g_b, \eta_b)}(\fr{s}_{\varepsilon}) - 
\fsw_{(g_b, \eta_b)}(-\fr{s}_{\varepsilon}).
\] 
Recall that $\eta_b$ satisfies \eqref{fortune}, and so does 
$\lambda \eta_b$ for all $\lambda > 1$, and hence: 
\[
\fsw_{(g_b, \eta_b)}(\fr{s}_{\varepsilon}) = 
\fsw_{(g_b, \lambda \eta_b)}(\fr{s}_{\varepsilon})\quad \text{for $\lambda$ 
positive arbitrary large,}
\]
and likewise for $-\fr{s}_{\varepsilon}$. Since $[\langle \eta_b \rangle] \in \mathbf{K}^{-}$ for each $b \in B$, it follows that the winding number of $\left\{ (g_b,\eta_b) \right\}_{b \in B}$ vanishes. The lemma now follows by \eqref{eq:lin-symmetry}. \qed
\end{proof}
\smallskip%

\noindent
Let $\Delta_{[\omega]}$ be the (possibly infinite) set of classes 
satisfying \eqref{e:eps-cond}, 
and let $\overline{\Delta}_{[\omega]}$ 
be defined as: 
$\overline{\Delta}_{[\omega]} = \Delta_{[\omega]}/\sim$, where 
$\varepsilon \sim -\varepsilon$. Set: $\zz^{\infty}_{2} = \prod_{\varepsilon \in \overline{\Delta}_{[\omega]}} \zz_2$. For $\varepsilon_k \in \overline{\Delta}_{[\omega]}$, 
let $q_{\varepsilon_k}$ be the homomorphism defined by \eqref{q} above. Extending $q_{\varepsilon_{k}}$ as
\[
\pi_1(S_{[\omega]}) \to \zz_2 \xrightarrow{I_{\varepsilon_{k}}} \zz_{2}^{\infty},
\]
where $I_{\varepsilon_{k}} \colon \zz_2 \to \zz^{\infty}_2$ is the inclusion homomorphism, 
we define 
$q \colon \pi_1(S_{[\omega]}) \to \zz_{2}^{\infty}$ as the (infinite) sum: 
\[
q = \oplus_{\varepsilon_k \in \overline{\Delta}_{[\omega]}} q_{\varepsilon_k}.
\]
The fibration \eqref{mcduff-fibr} leads to 
the following long exact sequence:
\[
\cdots \to \pi_1 \diff(X) \to 
\pi_1 (S_{[\omega]}, \omega) \to \pi_0 \symp(X,\omega) \to \pi_0 \diff(X) \to \cdots.
\]
It follows from Lemma \ref{q-to-K} that $q$ gives a homomorphism:
\[
q \colon \pi_1 (S_{[\omega]}, \omega)/\pi_1 \diff(X) \cong K(X,\omega) \to \zz^{\infty}_2.
\]
\section{Period domains for K3 surfaces}\label{K3}
The following material is well-known; see, e.g., 
\cite{Huyb, Looij-Pet, B-R}. A K3 surface 
is a simply-connected compact complex surface $X$ that has trivial canonical bundle. By a theorem of Siu \cite{Siu-1} every 
K3 surface $X$ admits a K{\"a}hler form. 
Fix an even unimodular lattice 
$\left( \Lambda, \langle \phantom{\cdot},\phantom{\cdot} \rangle \right)$ of signature $(3,19)$. 
(All such lattices are isometric: see \cite{Mil-H}). 
Set: $\Lambda_{\rr} = \Lambda \otimes \rr$ and $\Lambda_{\cc} = \Lambda \otimes \cc$. Given a 
K3 surface $X$, there are isometries 
$\alpha \colon H^2(X;\zz) \cong \Lambda$; a choice of such an isometry is called 
a marking of $X$. The isometry $\alpha$ determines the subspace 
$H^{2,0}(X) \subset H^2(X;\cc) \cong \Lambda_{\cc}$. If 
$\varphi_X \in H^{2,0}(X)$ is a generator, then 
$\langle \varphi_X, \varphi_X \rangle = 0$ and 
$\langle \varphi_X, \bar{\varphi}_X \rangle > 0$. The period map 
associates to a marked K3 surface $(X,\alpha)$ a point in the 
period domain
\[
\Phi= \left\{ \varphi \in \Lambda_{\cc}\ |\ 
\langle \varphi_X, \varphi_X \rangle = 0,\ \langle \varphi_X, \bar{\varphi}_X \rangle > 0 \right\}/\cc^{*} \subset \pp^{21},
\]
which is a complex manifold of dimension 20. 
Every point $\varphi \in \Phi$ 
determines the Hodge structure on $\Lambda_{\cc}$ as follows: 
\[
H^{2,0} = \cc \varphi,\quad H^{0,2} = \cc \bar{\varphi},\quad 
H^{1,1} = \left( H^{2,0} \oplus H^{0,2} \right)^{\bot}.
\]
Define $\overline{M}$ as:
\[
\overline{M} = \left\{ (\varphi,\kappa) \in \Phi \times \Lambda_{\rr}\ |\ 
\langle \varphi, \kappa \rangle = 0,\ \langle \kappa, \kappa \rangle > 0
\right\}.
\]
We set $\Delta = \left\{ \delta \in \Lambda\ |\ \langle \delta, \delta \rangle = -2  \right\}$. Define $M \subset \overline{M}$ as:
\[
M = \left\{ (\varphi,\kappa) \in 
\overline{M}\ |\ \text{\normalfont{for all 
$\delta \in \Delta$ if $\langle \varphi, \delta \rangle = 0$ then 
$\langle \kappa, \delta \rangle \neq 0$}}\right\}.
\]
Letting
\[
\text{pr} \colon M \to \Phi,\quad \text{pr}(\varphi,\kappa) = \varphi,
\]
we define an equivalence relation on $M$ as follows: $(\varphi,\kappa) \sim (\varphi,\kappa')$ iff $\kappa$ and $\kappa'$ are 
in the same connected component of the fiber $\text{pr}^{-1}(\varphi) \subset M$. 
We call 
\[\widetilde{\Phi} = M/\sim\]
the Burns-Rapoport period domain. In \cite{B-R} Burns and Rapoport prove that 
$\widetilde{\Phi}$ is a (non-Hausdorff) complex-analytic space. A point 
$(\varphi,\kappa) \in \widetilde{\Phi}$ gives rise to:
\begin{enumerate}[label={\arabic*)}]
    \item the Hodge structure on $\Lambda_{\cc}$ determined by $\varphi$,
\smallskip%
    \item a choice $V^{+}(\varphi)$ of one of the two 
    connected components of
    \begin{equation}\label{half-cone}
    V(\varphi) = \left\{ \kappa \in H^{1,1} \cap \Lambda_{\rr}\ |\ 
    \langle \kappa,\kappa \rangle > 0 \right\},
    \end{equation}

    \item a partition of $\Delta(\varphi) = \Delta \cap H^{1,1}$ 
    into $P = \Delta^{+}(\varphi) \cup \Delta^{-}(\varphi)$ such that:
    \smallskip%
    
    \begin{enumerate}[label={\alph*)}]
    \item if $\delta_1,\ldots,\delta_k \in \Delta^{+}(\varphi)$ and 
    $\delta = \sum n_i \delta_i \in \Delta(\varphi)$ with $n_i \geq 0$, then $\delta \in \Delta^{+}(\varphi)$, 
    and
    \smallskip%
    
    \item 
    $V_{P}^{+}(\varphi) = \left\{ \kappa \in V^{+}(\varphi)\ |\ 
    \langle \kappa,\delta \rangle > 0\ \text{\normalfont{for all $\delta \in \Delta^{+}(\varphi)$}}
    \right\}$ is not empty.
    \end{enumerate}  
\end{enumerate}
The Burns-Rapoport period map associates to a marked K3 surface 
$(X,\alpha)$ the point of 
$(\varphi,\kappa) \in \widetilde{\Phi}$ determined by
\begin{enumerate}[label={\arabic*)}]
    \item the Hodge structure of $H^2(X;\cc)$,
\smallskip%

    \item the component $V^{+}(X)$ of 
    $V(X) = \left\{ \kappa \in H^{1,1}(X;\rr)\ |\ 
    \langle \kappa, \kappa \rangle > 0 \right\}$ 
    containing 
    the cohomology class of any K{\"a}hler form on $X$,
\smallskip%

    \item the partition of
\[
\Delta(X) = 
    \left\{ \delta \in H^{1,1}(X;\rr) \cap H^2(X;\zz)\ |\ 
    \langle \delta,\delta \rangle = -2 
    \right\} 
\]
into $P = \Delta^{+}(X) \cup \Delta^{-}(X)$, where
\begin{equation}\label{e:part}
\Delta^{+}(X) = 
\left\{ \delta \in \Delta(X)\ |\ \text{\normalfont{$\delta$ is an effective divisor}}\right\},\quad 
\Delta^{-}(X) = 
\left\{ \delta \in \Delta(X)\ |\ -\delta \in \Delta^{+}(X)    
\right\}.
\end{equation}
\end{enumerate}
It follows from the Riemann-Roch formula that 
either $\delta$ or $-\delta$ is effective for each 
$\delta \in \Delta(X)$, hence \eqref{e:part} is indeed 
a partition. Finally, we set:
\[
V^{+}_{P}(X) = \left\{
\kappa \in V^{+}(X)\ |\ \langle \kappa, \delta \rangle > 0\ \text{\normalfont{for all $\delta \in \Delta^{+}(X)$}}\right\}.
\]
An element $\kappa \in V^{+}_P(X)$ is called a K{\"a}hler polarization on $X$. 
If $X$ is given a K{\"a}hler form, 
then the cohomology class of this form gives 
a polarization. Conversely, every class $\kappa \in V^{+}_P(X)$ is a 
cohomology class of some K{\"a}hler form on $X$. We call $X$ polarized if 
the choice of $\kappa \in V^{+}_P(X)$ has been specified. 
A classical result (see, e.g., \cite{Siu-2}) is that every point 
$(\varphi,\kappa) \in M$ is a period of some marked 
$\kappa$-polarized K3 surface. Two smooth marked K3 surfaces with 
the same Burns-Rapoport periods are isomorphic. In other words, we have:
\begin{theorem}[Burns-Rapoport, \cite{B-R}]
Let $X$ and $X'$ be two non-singular K3 surfaces. 
If $\theta \colon H^2(X;\zz) \to H^2(X';\zz)$ is an isometry which 
preserves the Hodge structures, maps $V^{+}(X)$ to $V^{+}(X')$ and 
$\Delta^{+}(X)$ to $\Delta^{+}(X')$, then there is 
a unique isomorphism $\Theta \colon X' \to X$ with $\Theta^{*} = \theta$.
\end{theorem}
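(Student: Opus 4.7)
The plan is to prove this global Torelli theorem by reducing the general (possibly non-projective) case to the classical projective case. First I would rephrase the hypothesis in terms of periods: fix any marking $\alpha' \colon H^2(X';\zz) \to \Lambda$ and set $\alpha = \alpha' \circ \theta$; the three conditions on $\theta$ are equivalent to the assertion that the marked K3 surfaces $(X,\alpha)$ and $(X',\alpha')$ have equal Burns-Rapoport periods in $\widetilde{\Phi}$. The problem becomes: two marked K3 surfaces with equal Burns-Rapoport periods are isomorphic via a unique biholomorphism intertwining the markings.

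The first main step is the projective case. Here I would invoke the classical Torelli theorem of Piatetski-Shapiro and Shafarevich: the hypotheses that $\theta$ sends $V^{+}(X)$ to $V^{+}(X')$ and $\Delta^{+}(X)$ to $\Delta^{+}(X')$ force $\theta$ to carry the ample cone of $X$ into the ample cone of $X'$, and the resulting polarized Hodge isometry is realized by a unique biholomorphism $\Theta \colon X' \to X$.

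The second main step is to bootstrap to the non-projective setting via deformation. Projective K3 surfaces form a countable dense union of hyperplane sections in $\Phi$ (the Noether-Lefschetz loci, where some primitive class of positive square becomes of type $(1,1)$). I would construct small holomorphic families of marked K3 surfaces $\{X_s\}_{s \in D}$ and $\{X'_s\}_{s \in D}$ over a disk $D$ with $X_0 = X$, $X'_0 = X'$, arranged so that the markings are horizontal under the Gauss-Manin connection and so that both $X_{s_n}$ and $X'_{s_n}$ are projective for a sequence $s_n \to 0$. For each such $s_n$, the projective case produces an isomorphism $\Theta_{s_n} \colon X'_{s_n} \to X_{s_n}$ realizing the transported Hodge isometry. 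The graphs $\Gamma_{\Theta_{s_n}} \subset X'_{s_n} \times X_{s_n}$ lie in a fixed homology class determined by $\theta$, so Bishop-type compactness of analytic cycles yields a limiting analytic cycle $\Gamma_0 \subset X' \times X$, which one then verifies is the graph of the desired biholomorphism.

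The main obstacle will be this compactness/limiting step: ensuring that the limit cycle is genuinely the graph of a biholomorphism, and not a degenerate correspondence (say, with positive-dimensional fibers over one factor or with multiplicities). Ruling this out requires exploiting that each $\Theta_{s_n}$ is a biholomorphism of smooth minimal surfaces with $K = 0$ and that its graph pulls back a non-vanishing holomorphic two-form to a non-vanishing holomorphic two-form, so irreducibility and fiber dimension can be controlled in the limit. For uniqueness, if $\Theta_1, \Theta_2 \colon X' \to X$ both realize $\theta$, then $\Theta_1 \circ \Theta_2^{-1}$ is an automorphism of $X$ acting as the identity on $H^2(X;\zz)$; by the standard fact that automorphisms of K3 surfaces act faithfully on second integral cohomology (a consequence of the holomorphic Lefschetz fixed-point theorem combined with the classification of symplectic K3 automorphisms), this forces $\Theta_1 = \Theta_2$.
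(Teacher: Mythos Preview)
The paper does not give its own proof of this statement: it is quoted verbatim as a theorem of Burns--Rapoport, with the citation \cite{B-R}, and the text immediately passes to the family version (Theorem~\ref{t:BR-families}), also attributed to \cite{B-R} without proof. So there is nothing in the paper to compare your proposal against; the author treats this as background.

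That said, your outline is a fair summary of the classical strategy: reduce to the projective case via Piatetski-Shapiro--Shafarevich, then pass to the general K\"ahler case by density of algebraic periods in $\Phi$ together with a compactness argument for graphs of isomorphisms. You have also correctly flagged the genuine difficulty, namely controlling the limit cycle $\Gamma_0$ so that it is the graph of a biholomorphism rather than a degenerate correspondence. One small caution: the argument that $\theta$ carries the ample cone to the ample cone does not follow from the hypotheses $V^{+}(X)\mapsto V^{+}(X')$ and $\Delta^{+}(X)\mapsto\Delta^{+}(X')$ alone without invoking the description of the K\"ahler (or ample) cone as a chamber cut out by the effective $(-2)$-classes; you should make that step explicit. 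Your uniqueness argument via faithfulness of $\aut(X)$ on $H^2(X;\zz)$ is standard and correct.
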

\smallskip%

\noindent
More generally, we have:
\begin{theorem}[Burns-Rapoport, \cite{B-R}]\label{t:BR-families} 
Let $S$ be a complex-analytic manifold, and let $p \colon \calx \to S$ and 
$p' \colon \calx' \to S$ be two families of non-singular K3 surfaces. If 
\[
\theta \colon \calr^2 p_*(\zz) \to \calr^2 p'_*(\zz)
\]
is an isomorphism of second cohomology lattices which preserves the 
Hodge structures, maps $V^{+}(X_s)$ to $V^{+}(X'_s)$ and 
$\Delta^{+}(X_s)$ to $\Delta^{+}(X'_s)$, then there is a unique 
family isomorphism $\Theta \colon \calx' \to \calx$, 
with $\Theta^{*} = \theta$, such that the following 
diagram is commutative:
\begin{equation}\label{d:x-x'-R}
\begin{tikzcd}[column sep=small]
\calx' \arrow{rr}{\Theta} \arrow{dr} & & \calx \arrow{dl}\\
& S. &
\end{tikzcd}
\end{equation}
\end{theorem}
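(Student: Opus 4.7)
The plan is to apply the preceding (non-family) Burns--Rapoport theorem fiberwise and then upgrade the resulting pointwise bijection to a holomorphic family isomorphism via local deformation theory. For each $s \in S$, restriction of $\theta$ yields a Hodge isometry $\theta_s \colon H^2(X_s;\zz) \to H^2(X'_s;\zz)$ carrying $V^{+}(X_s)$ onto $V^{+}(X'_s)$ and $\Delta^{+}(X_s)$ onto $\Delta^{+}(X'_s)$. The fiberwise Burns--Rapoport theorem then supplies a unique biholomorphism $\Theta_s \colon X'_s \to X_s$ with $\Theta_s^{*} = \theta_s$; assembling these produces a set-theoretic fiber-preserving bijection $\Theta \colon \calx' \to \calx$ over $S$, whose uniqueness at the level of total spaces is inherited from fiberwise uniqueness.

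The heart of the proof is showing that $\Theta$ is holomorphic. This is local on $S$, so fix $s_0 \in S$ and shrink $S$ to a contractible neighborhood $U$ on which both local systems $\calr^2 p_{*}\zz$ and $\calr^2 p'_{*}\zz$ are trivial; choosing trivializations that intertwine under $\theta$ turns $\calx|_U$ and $\calx'|_U$ into marked families by a fixed lattice $\Lambda$. The assignment $s \mapsto H^{2,0}(X_s) \subset \Lambda_{\cc}$ defines a holomorphic period map $P \colon U \to \Phi$, and similarly a map $P' \colon U \to \Phi$ for the primed family; the hypothesis that $\theta$ preserves Hodge structures forces $P = P'$. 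Local Torelli, together with the universality of the Kuranishi family of a K3 surface for marked small deformations, shows that a marked holomorphic family of K3 surfaces is, in a neighborhood of any point, isomorphic to the pullback along its period map of a local universal marked family. Consequently both $\calx|_U$ and $\calx'|_U$ are isomorphic to the pullback along $P$ of the same local universal family, and composing those identifications yields a holomorphic family isomorphism $\calx'|_U \to \calx|_U$ whose restriction to each fiber coincides with $\Theta_s$ by the fiberwise uniqueness.

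The local holomorphic isomorphisms constructed above are determined by their fiberwise effect, so they agree on overlaps and glue to a global biholomorphism $\Theta \colon \calx' \to \calx$; commutativity of diagram \eqref{d:x-x'-R} and global uniqueness with $\Theta^{*} = \theta$ follow from the same fiberwise-uniqueness argument. The main obstacle I expect is the local holomorphicity step: one must invoke that the Kuranishi space of a K3 surface is smooth of complex dimension $20$ and that the period map from it to $\Phi$ is a local biholomorphism, so that two marked holomorphic K3 families over $U$ with the same period map are necessarily isomorphic as families. Once this local rigidity is in hand, the passage from the pointwise Burns--Rapoport theorem to the family version is essentially formal; without it one would have to argue directly that the graph of $\Theta$ is an analytic subvariety of $\calx \times_S \calx'$, a substantially more delicate task.
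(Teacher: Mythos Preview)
The paper does not supply a proof of this theorem; it is quoted from Burns--Rapoport \cite{B-R} as a black box and then used to build the universal family in \S\ref{universal}. Your outline is the standard argument and is essentially correct: the decisive facts are that $H^0(X,T_X)=0$ for a K3 surface (so the Kuranishi family is universal, not merely versal) and local Torelli, which together force any two marked families over a small base with identical period maps to be uniquely isomorphic as marked families. Since an automorphism of a K3 acting trivially on $H^2(X;\zz)$ is the identity, the marked isomorphism you construct necessarily agrees fiberwise with the $\Theta_s$ coming from the pointwise theorem, and the gluing step is then automatic.
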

\noindent
Let us show how this theorem is used to construct a fine moduli space of 
polarized K3 surfaces.
\section{Universal family of marked polarized K3's}\label{universal} 
Let $p \colon \calx \to S$ be a complex-analytic family of K3 surfaces. 
Regarding $\Lambda$ as a group, 
let $\overline{\Lambda}_{S}$ be a locally-constant sheaf on $S$ taking 
values in $\Lambda$. If $\calr^2 p(\zz)$ is globally-constant, then 
there are isomorphisms $\alpha \colon \calr^2 p(\zz) \to \overline{\Lambda}_{S}$. 
A choice of an isomorphism $\alpha \colon \calr^2 p(\zz) \to \overline{\Lambda}_{S}$ is called a marking of $\calx$. A marked 
family of K3 surfaces $(\calx,\alpha)$ carries a holomorphic 
map $\mathrm{T}_{(\calx,\alpha)} \colon S \to \Phi$ which 
associates to each marked fiber $X_s$ the corresponding point of $\varphi$. 
This map is called the period map for the family $\calx$. A polarization of $\calx$ is a section $\kappa \in \Gamma(S,\overline{\Lambda}_{S} \otimes \rr)$ such that 
$\kappa|_s \in V^{+}_{P}(X_s)$ for each $s \in S$. 
The period map $\mathrm{T}_{(\calx,\alpha)}$ together with $\kappa$ gives a map 
$S \to \Phi \times \Lambda_{\rr}$, whose 
image is contained in $M$; the composite map  
\[
S \xrightarrow{\ \left(\mathrm{T}_{(\calx,\alpha)}, \kappa\right)\ } M \xrightarrow{\ /\sim\ } \widetilde{\Phi}.
\] 
is called the polarized period map for the family $\calx$. This map is independent of the choice of $\kappa$, because $V^{+}_{P}(X_s)$ is connected. We can restate Theorem \ref{t:BR-families} as follows: 
Let $(\calx,\alpha)$ and $(\calx',\alpha')$ be two marked families of K3 surfaces over a complex-analytic manifold $S$. Suppose that their 
polarized period maps agree on $S$. Then there exists a unique family isomorphism 
$\Theta \colon \calx' \to \calx$, with $\alpha' \circ \Theta^{*} = \alpha$, such that diagram \eqref{d:x-x'-R} is commutative. 
\smallskip%

Fix $\kappa \in \Lambda_{\rr}$ with $\kappa^2 > 0$. Letting 
\[
\Delta_{\kappa} = \left\{ \delta \in \Delta\ |\ 
\langle \kappa, \delta \rangle = 0 \right\},
\]
we define two complex manifolds $M_{\kappa} \subset \overline{M}_{\kappa}$ as:
\[
\overline{M}_{\kappa} = 
\left\{ \varphi \in \Phi\ |\ 
\langle \varphi,\kappa \rangle = 0 \right\},\quad 
M_{\kappa} = 
\left\{ \varphi \in \Phi\ |\ \text{$\langle \varphi,\kappa \rangle = 0$, and 
$\langle \varphi, \delta \rangle \neq 0$ for all $\delta \in \Delta_{\kappa}$}\right\}.
\]
Setting $H_{\delta} = 
\left\{ \varphi \in \overline{M}_{\kappa}\ |\ 
\langle \delta, \varphi \rangle = 0
\right\}$, where $\delta \in \Delta_{\kappa}$, we have $M_{\kappa} = \overline{M}_{\kappa} - \cup_{\Delta_{\kappa}} H_{\delta}$.
\begin{lemma}[\cite{B-R}]\label{continuity}
Let $\kappa_0 \in \Lambda_{\rr}$, and assume $\kappa_0^2 > 0$. 
Let $\varphi_0 \in \overline{M}_{\kappa_0}$. Then 
there is a neighbourhood $U$ of $\varphi_0$ in 
$\overline{M}_{\kappa}$ and a neighbourhood $K$ of $\kappa_0$ in $\Lambda_{\rr}$ such that for all $(\varphi, \kappa) \in U \times K$,
\begin{center}
    if $\delta \in \Delta$ satisfies 
    $\langle \delta, \kappa \rangle = 
    \langle \delta, \varphi \rangle = 0$, then 
    $\langle \delta, \kappa_0 \rangle = 
    \langle \delta, \varphi_0 \rangle = 0$.
\end{center}
\end{lemma}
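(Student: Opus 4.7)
The plan is a local-finiteness argument. Since $\varphi_0^2 = 0$, $\langle \varphi_0, \bar\varphi_0 \rangle > 0$, $\langle \varphi_0, \kappa_0 \rangle = 0$ and $\kappa_0^2 > 0$, the vectors $\kappa_0, \re\varphi_0, \im\varphi_0 \in \Lambda_{\rr}$ are pairwise orthogonal of positive square, and hence span a positive-definite $3$-plane $P_0 \subset \Lambda_{\rr}$; its orthogonal complement $P_0^\perp$ is negative-definite of dimension $19$. I would endow $\Lambda_{\rr}$ with the Euclidean norm $\|\cdot\|_0$ obtained by flipping the sign of the bilinear form on $P_0^\perp$, so that $P_0 \oplus P_0^\perp$ is $\|\cdot\|_0$-orthogonal. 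For $(\varphi, \kappa) \in \overline{M}$ sufficiently close to $(\varphi_0, \kappa_0)$, the corresponding vectors $\kappa, \re\varphi, \im\varphi$ remain linearly independent and span a positive-definite $3$-plane $P(\varphi, \kappa) \subset \Lambda_{\rr}$; the conditions $\langle \delta, \varphi \rangle = \langle \delta, \kappa \rangle = 0$ say exactly that $\delta$ lies in $P(\varphi, \kappa)^\perp$.

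Next I would observe that $(\varphi, \kappa) \mapsto P(\varphi, \kappa)$ is continuous into the Grassmannian of positive-definite $3$-planes in $\Lambda_{\rr}$, so that after shrinking a neighborhood $U \times K$ of $(\varphi_0, \kappa_0)$, any unit vector $v \in P(\varphi, \kappa)^\perp$ (with respect to $\|\cdot\|_0$) satisfies $\|\pi_{P_0}(v)\|_0 \leq \epsilon$, with $\epsilon$ as small as we please. The key estimate then reads: for $\delta \in \Delta \cap P(\varphi, \kappa)^\perp$, the $\|\cdot\|_0$-orthogonal decomposition $\delta = \delta_{P_0} + \delta_{P_0^\perp}$ gives $-2 = \delta^2 = \|\delta_{P_0}\|_0^2 - \|\delta_{P_0^\perp}\|_0^2$, hence $\|\delta\|_0^2 = 2 \|\delta_{P_0}\|_0^2 + 2 \leq 2 \epsilon^2 \|\delta\|_0^2 + 2$, whence $\|\delta\|_0^2 \leq 2/(1 - 2 \epsilon^2)$. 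Taking $\epsilon < 1/\sqrt{2}$ yields a uniform bound, and since $\Lambda$ is a lattice and $\|\cdot\|_0$ is Euclidean, only finitely many $\delta_1, \ldots, \delta_N \in \Delta$ satisfy it.

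To conclude, I would invoke continuity once more: for each $\delta_i$ that is not already in $P_0^\perp$, at least one of the functions $(\varphi, \kappa) \mapsto \langle \delta_i, \varphi \rangle$, $(\varphi, \kappa) \mapsto \langle \delta_i, \kappa \rangle$ is nonzero at $(\varphi_0, \kappa_0)$, and therefore remains nonzero on some smaller neighborhood. Intersecting these finitely many neighborhoods shrinks $U \times K$ to one on which any $\delta \in \Delta$ killing the pairings at a nearby $(\varphi, \kappa)$ is forced to lie in $P_0^\perp$, i.e.\ satisfies $\langle \delta, \kappa_0 \rangle = \langle \delta, \varphi_0 \rangle = 0$, as required. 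The main obstacle I expect is producing the \emph{a priori} norm bound on $\delta \in \Delta \cap P(\varphi, \kappa)^\perp$; once that estimate is in hand, the discreteness of $\Lambda$ and continuity of the pairings finish the proof routinely.
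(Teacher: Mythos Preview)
Your argument is correct and follows essentially the same route as the paper, which for this lemma defers to the proof of Lemma~\ref{zariski-open}: both exploit that $\kappa_0,\re\varphi_0,\im\varphi_0$ span a positive-definite $3$-plane whose orthogonal complement in signature $(3,19)$ is negative definite, and then invoke discreteness of $\Lambda$ to reduce to finitely many candidate $\delta$'s. The only difference is presentational---the paper argues by contradiction (normalize a hypothetical unbounded sequence $\delta_i$ and pass to a limit violating the signature), whereas you give the direct quantitative bound $\|\delta\|_0^2 \le 2/(1-2\epsilon^2)$; your version is slightly more explicit but not a genuinely different idea.
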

\begin{proof}
See Proposition 2.3 in \cite{B-R} and 
also see the proof of Lemma \ref{zariski-open} below. \qed
\end{proof}
\begin{lemma}\label{zariski-open}
Every $\varphi \in \overline{M}_{\kappa}$ has 
neighbourhood $U$ such that 
$H_{\delta} \cap U = \emptyset$ for all 
but finitely many $\delta \in \Delta_{\kappa}$. Hence, in particular, 
$M_{\kappa}$ is an open submanifold of $\overline{M}_{\kappa}$.
\end{lemma}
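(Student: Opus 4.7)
The idea is to use the signature $(3,19)$ of $\Lambda$, together with $\kappa^2 > 0$ and the Hodge positivity of the period, to confine every $\delta \in \Delta_\kappa$ whose hyperplane $H_\delta$ meets a small neighbourhood of a given $\varphi_0 \in \overline{M}_\kappa$ to a negative-definite subspace of $\Lambda_\rr$, where only finitely many lattice vectors of norm $-2$ exist. Concretely, fix $\varphi_0 \in \overline{M}_\kappa$ and write $\varphi_0 = \xi_0 + i\eta_0$ with $\xi_0,\eta_0 \in \Lambda_\rr$. The conditions $\langle \varphi_0,\varphi_0 \rangle = 0$, $\langle \varphi_0,\bar\varphi_0\rangle > 0$, and $\langle \varphi_0,\kappa\rangle = 0$ translate into $\xi_0 \perp \eta_0$, $\xi_0^2 = \eta_0^2 > 0$, and $\kappa \perp \xi_0,\eta_0$; combined with $\kappa^2 > 0$, this makes $P_0 := \operatorname{span}_\rr(\kappa,\xi_0,\eta_0)$ a $3$-dimensional positive-definite subspace, so $P_0^\perp$ is $19$-dimensional and negative-definite. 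Let $B_0$ be the associated \emph{majorant}, i.e.\ the positive-definite form equal to $\langle\cdot,\cdot\rangle$ on $P_0$ and to $-\langle\cdot,\cdot\rangle$ on $P_0^\perp$.

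Performing the same construction at a nearby period $\varphi' = \xi'+i\eta' \in \overline{M}_\kappa$ yields a continuously varying positive-definite $3$-plane $P' := \operatorname{span}_\rr(\kappa,\xi',\eta')$ and majorant $B_{\varphi'}$, whence by continuity of the corresponding Gram matrices there exist a neighbourhood $U$ of $\varphi_0$ in $\overline{M}_\kappa$ and a constant $C>0$ such that
\[
B_0(x,x) \leq C\,B_{\varphi'}(x,x) \quad \text{for all } x \in \Lambda_\rr \text{ and } \varphi' \in U.
\]
If $\delta \in \Delta_\kappa$ and $\varphi' \in H_\delta \cap U$, then $\delta \perp \kappa,\xi',\eta'$, hence $\delta \in (P')^\perp$ and $B_{\varphi'}(\delta,\delta) = -\langle\delta,\delta\rangle = 2$; plugging into the display gives $B_0(\delta,\delta) \leq 2C$. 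Since $B_0$ is positive-definite and $\Lambda$ is discrete in $\Lambda_\rr$, only finitely many $\delta \in \Lambda$ satisfy this bound, proving the local finiteness assertion.

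For the openness, each $H_\delta$ is the zero locus in $\overline{M}_\kappa$ of the holomorphic linear functional $\varphi \mapsto \langle\delta,\varphi\rangle$ and is therefore a closed analytic hypersurface; as only finitely many meet $U$, their union is closed in $U$, so $M_\kappa \cap U = U \setminus \bigcup_{\delta\in\Delta_\kappa} H_\delta$ is open, and letting $\varphi_0$ range through $\overline{M}_\kappa$ gives openness of $M_\kappa$. The only slightly delicate step is the uniform comparison of majorants, but this reduces to the continuous dependence of the Gram matrix of $(\kappa,\xi',\eta')$ on $\varphi'$ and is routine, so I do not anticipate a real obstacle. An alternative route, bypassing majorants, is to invoke Lemma \ref{continuity} with $\kappa_0 = \kappa$ and $K = \{\kappa\}$, which directly reduces finiteness of $\{\delta\in\Delta_\kappa : H_\delta \cap U \neq \emptyset\}$ to finiteness of $\{\delta \in \Lambda \cap P_0^\perp : \delta^2 = -2\}$, established by the same discreteness argument.
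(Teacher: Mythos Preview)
Your proof is correct and rests on the same geometric observation as the paper's: the three vectors $\kappa$, $\operatorname{Re}\varphi_0$, $\operatorname{Im}\varphi_0$ span a positive-definite $3$-plane in a space of signature $(3,19)$, so its orthogonal complement is negative definite and can contain only finitely many lattice vectors of bounded norm. The execution, however, differs. The paper argues by contradiction and compactness: assuming infinitely many $\delta_i\in\Delta_\kappa$ with $H_{\delta_i}$ meeting arbitrarily small neighbourhoods of $\varphi$, it normalizes $\delta_i/\|\delta_i\|$, extracts a limit $\delta$ with $\delta^2=0$ orthogonal to $\kappa,x_1,x_2$, and observes that no such nonzero vector can exist in signature $(3,19)$. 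Your argument is instead direct and quantitative: you introduce the majorant $B_{\varphi'}$ associated to each nearby period, establish a uniform comparison $B_0\le C\,B_{\varphi'}$ on a neighbourhood by continuity, and conclude that any relevant $\delta$ satisfies $B_0(\delta,\delta)\le 2C$, hence lies in a finite set. Your route is a bit longer but gives an explicit bound and avoids the limiting step; the paper's is terser but leaves the reader to unpack why the limiting configuration is forbidden. Either way the content is the same, and your alternative suggestion of invoking Lemma~\ref{continuity} directly is exactly in the spirit of how the paper organizes the two lemmas.
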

\begin{proof}
We let $x \in \Lambda_{\cc}$ be 
the vector corresponding to the point $\varphi \in \Phi$. Letting 
$x = x_1 + i x_2$, $x_i \in \Lambda_{\rr}$, 
we obtain three pairwise orthogonal vectors 
$(\kappa,x_1,x_2)$ in $\Lambda_{\rr}$ such that
\[
\kappa^2 > 0,\quad x_1^2 > 0,\quad x_2^2 > 0.
\]
Fix some euclidean norm $||\hphantom{x}||$ on $\Lambda_{\rr}$. 
It is clear that any 
ball (with respect to the norm $||\hphantom{x}||$) contains only finitely many 
elements of $\Delta_{\kappa}$. Suppose, 
contrary to our claim, that there is an unbounded sequence $\left\{ \delta_i \right\}_{k=1}^{\infty}$ such that:
\begin{center}
$||\delta_i|| \to \infty$ and $\left( \delta_i, x_1 \right), \left( \delta_i, x_2 \right), 
\left( \delta_i, \kappa \right) \to 0
$ as $i \to \infty$.    
\end{center}
Assuming, as we may, that $\left\{ \left\{ \delta_i \right\}/||\delta_i|| \right\}_{i=1}^{\infty} 
\to \delta \in \Lambda_{\rr}$ as $i \to \infty$, we obtain 
four pairwise orthogonal non-zero vectors 
$(\delta,\kappa,x_1,x_2)$ such that
\[
\delta^2 = 0\quad\text{and}\quad\kappa^2 > 0,\quad x_1^2 > 0,\quad x_2^2 > 0.
\]
Such a configuration of vectors, however, is not realizable in the space of signature $(3,19)$. \qed
\end{proof}
\smallskip

For a point $\varphi \in M_{\kappa}$, let $(X,\alpha)$ be 
a marked K3 surface whose Burns-Rapoport period is 
$(\kappa,\varphi)$. Let $p \colon (\cals,X) \to (S,*)$ be its Kuranishi 
family. By restricting to smaller neighbourhoods of $*$, we may assume that $S$ is 
contractible. Then the family $\cals$ has a natural marking $\alpha \colon \calr^2p_{*}(\zz) \to H^2(X;\zz)$, uniquely determined by the marking of $X$. The corresponding period map $\mathrm{T}_{(\cals,\alpha)} \colon S \to \Phi$ is a local isomorphism at $*$ (the local Torelli theorem). Thus, $M_{\kappa}$ admits an open cover $\left\{ U_i \right\}$ 
such that: for each $U_i$, there is a marked family $\calx_{i} \to U_i$ 
with $\mathrm{T}_{(\calx_i,\alpha_i)} = \text{id}$. 
Each $(\calx_i,\alpha_i)$ is polarized by the constant section 
$\kappa \in \Gamma(U_i,\overline{\Lambda}_{U_i} \otimes \rr)$. 
Applying the Burns-Rapoport theorem for families, one can construct a global marked family $\calx \to M_{\kappa}$ by gluing all the $\calx_i$'s; 
namely, the families $\calx_i$ and $\calx_j$ can be uniquely identified over $U_i \cap U_j$ by a morphism 
$\Theta_{ij} \colon \calx_j \to \calx_i$ such that 
$\Theta_{ij}^{*} \circ \alpha_j = \alpha_i$ and such that 
$\Theta_{ij}$ fits into the diagram:
\[
\begin{tikzcd}[column sep=small]
\calx_j \arrow{rr}{\Theta_{ij}} \arrow{dr} & & \calx_i \arrow{dl}\\
& U_i \cap U_j &
\end{tikzcd}
\]
We call the family $\calx \to M_{\kappa}$ the universal family 
of marked ($\kappa$-)polarized K3's. 

\section{Proof of Theorem \ref{t:main}}
Given $\kappa \in \Lambda_{\rr}$, with $\kappa^2 > 0$, the space $\overline{M}_{\kappa}$ consists of 
two connected components $\overline{M}_{\kappa}^{\,\pm}$, each being 
contractible; they are interchanged by 
the mapping $\varphi \to \bar{\varphi}$. $M_{\kappa}$ also consists of two connected 
components $M_{\kappa}^{\pm}$, which, however, are not contractible. 
\begin{lemma}\label{l:contract}
$H_1(M_{\kappa}^{+};\zz) = 
\bigoplus_{\delta \in \overline{\Delta}_{\kappa}} \zz$, and likewise for $M_{\kappa}^{-}$. $\overline{\Delta}_{\kappa}$ denotes the quotient space 
obtained by identifying the elements $\delta \in \Delta_{\kappa}$ and 
$(-\delta) \in \Delta_{\kappa}$.
\end{lemma}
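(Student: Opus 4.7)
The space $\overline{M}_\kappa^+$ is a type-IV bounded symmetric domain associated to the signature-$(2,19)$ real lattice $\kappa^\perp$, and in particular is contractible. For each $\delta\in\Delta_\kappa$, the hypersurface $H_\delta\cap\overline{M}_\kappa^+$ is the analogous type-IV domain attached to the signature-$(2,18)$ sublattice $(\zz\kappa\oplus\zz\delta)^\perp$, hence a smooth, connected, contractible complex submanifold of complex codimension $1$ (the two components of the $(2,18)$-domain correspond, under the natural inclusion, to the two components $\overline{M}_\kappa^\pm$ of the ambient $(2,19)$-domain, since the map $\varphi\mapsto\bar\varphi$ interchanges them compatibly). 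Since $H_\delta = H_{-\delta}$, the family is indexed by $\overline{\Delta}_\kappa$, and by Lemma~\ref{zariski-open} it is locally finite in $\overline{M}_\kappa^+$.

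The plan is to construct an isomorphism $\lambda\colon H_1(M_\kappa^+;\zz)\xrightarrow{\sim}\bigoplus_{[\delta]\in\overline{\Delta}_\kappa}\zz$ via linking numbers. For each $[\delta]$, define $\lambda_\delta\colon H_1(M_\kappa^+;\zz)\to\zz$ as follows: given a $1$-cycle $\gamma\subset M_\kappa^+$, use contractibility of $\overline{M}_\kappa^+$ to pick a $2$-chain $\sigma\subset\overline{M}_\kappa^+$ with $\partial\sigma=\gamma$, put $\sigma$ in transverse position with respect to $H_\delta$, and let $\lambda_\delta(\gamma)$ be the signed count of $\sigma\cap H_\delta$, using the canonical complex orientation on the normal bundle of $H_\delta$. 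Independence of the choice of $\sigma$ follows because any two choices differ by a $2$-cycle in the contractible ambient, whose algebraic intersection with the codimension-$2$ submanifold $H_\delta$ must vanish. By local finiteness, a fixed compact chain $\sigma$ meets only finitely many $H_\delta$'s, so $\lambda_\delta(\gamma)=0$ for all but finitely many $\delta$, and the assembled map $\lambda=(\lambda_\delta)$ takes values in the direct sum. Surjectivity is immediate: the meridian $\mu_\delta$ (oriented boundary of a small transverse $2$-disk through a point of $H_\delta$ disjoint from all other $H_{\delta'}$) satisfies $\lambda_{\delta'}([\mu_\delta])=1$ if $[\delta]=[\delta']$ and $0$ otherwise.

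The main work, and the main obstacle, is injectivity. Given $\gamma$ with $\lambda(\gamma)=0$, pick $\sigma\subset\overline{M}_\kappa^+$ with $\partial\sigma=\gamma$ in general position with the arrangement. Its compact image meets only finitely many hypersurfaces $H_{\delta_1},\dots,H_{\delta_n}$, and by assumption each signed count $\sigma\cap H_{\delta_i}$ vanishes. One then processes the hypersurfaces one at a time: for each $H_{\delta_i}$, pair the signed intersection points into $(+,-)$ couples and cancel them by finger-moves (small compactly-supported homotopies of $\sigma$ along arcs joining paired points). The delicate step is to arrange these cancellations so that each finger-move is supported in an open set disjoint from the other $H_{\delta_j}$'s already cleared or still to be addressed; this is possible because, by local finiteness, every pair of intersection points lies in a neighborhood meeting only $H_{\delta_i}$ itself among the relevant finite collection. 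After $n$ such steps, $\sigma$ is disjoint from every $H_\delta$, hence $\sigma\subset M_\kappa^+$ and $\gamma$ bounds in $M_\kappa^+$. This establishes injectivity of $\lambda$ and completes the proof; the argument for $M_\kappa^-$ is identical after applying the involution $\varphi\mapsto\bar\varphi$.
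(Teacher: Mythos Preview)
Your approach via linking numbers is exactly the paper's, only far more detailed: after defining the map, the paper simply asserts ``it is easy to show that [it] is an isomorphism.'' (The paper's proof in fact writes the count $\bmod\ 2$, which does not match its own $\zz$-valued statement; your signed count using the complex coorientation is what the statement actually requires.) Your treatment of well-definedness, of the direct-sum target, and of surjectivity via meridians is correct.

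There is, however, a gap in the injectivity step. You assert that ``by local finiteness, every pair of intersection points lies in a neighborhood meeting only $H_{\delta_i}$.'' Local finiteness only furnishes such a neighborhood around a \emph{single} point; two opposite-sign intersections of $\sigma$ with $H_{\delta_i}$ may lie far apart, and there is no reason a connected region containing both can be chosen to miss the other $H_{\delta_j}$. The input you actually need is the connectedness of $H_{\delta_i}\setminus\bigcup_{[\delta_j]\ne[\delta_i]}H_{\delta_j}$, which follows from facts you have already established: each $H_{\delta_i}\cap H_{\delta_j}$ has real codimension $2$ in the connected manifold $H_{\delta_i}\cap\overline M_\kappa^+$, and the family of these intersections is locally finite there (immediate from local finiteness in the ambient). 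Join the paired points by an arc $\alpha\subset H_{\delta_i}$ avoiding every other hypersurface; the normal circle bundle over $\alpha$ is an annulus lying entirely in $M_\kappa^+$ (for the tube thin enough) with boundary the signed difference of the two meridians. Excising the two small disks of $\sigma$ around the pair and gluing in this annulus removes those two intersections with $H_{\delta_i}$ while leaving the intersections with every other $H_{\delta_j}$ unchanged. Iterating over all pairs produces a $2$-chain in $M_\kappa^+$ bounding $\gamma$, and with this correction your argument is complete.
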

\begin{proof}
Let $\gamma \colon [0,1] \to M_{\kappa}^{+}$ be a loop. 
Since $\overline{M}_{\kappa}^{+}$ is contractible, it follows that 
$\gamma$ is nullhomotopic in $\overline{M}_{\kappa}^{+}$. 
Let $\psi \colon D \to \overline{M}_{\kappa}^{+}$, where 
$D$ is a 2-disc, be a nullhomotopy of $\gamma$ in $\overline{M}_{\kappa}^{+}$. 
Since $\overline{M}_{\kappa}^{+}$ is contractible, it follows that 
such a map $\psi$ is unique up to homotopies that agree with $\psi$ on $\del D$.
By Lemma \ref{zariski-open}, there are but finitely many $\delta \in \Delta_{\kappa}$ such that $\psi(D) \cap H_{\delta}$ is not empty. 
Each $H_{\delta}$ is a smooth codimension-2 subvariety of $\overline{M}_{\kappa}^{+}$. Hence, we may perturb $\psi$ so as it 
is transverse to each $H_{\delta}$. 
Setting
\[
\ell_{\delta}(\gamma) = \#\left\{\text{points of $\psi^{-1}(H_{\delta})$}\right\}\,\mod\,2,
\]
we associate to $\gamma$ a sequence 
$\left\{ \ell_{\delta}(\gamma) \right\}_{\delta \in \overline{\Delta}_{\kappa}}$, which is an element 
of $\bigoplus_{\delta \in \overline{\Delta}_{\kappa}} \zz$. It is clear that $\ell_{\delta}(\gamma)$ depends only on the homology class of $\gamma$, so the correspondence
\begin{equation}\label{e:lk}
\ell \colon \gamma \to \left\{ \ell_{\delta}(\gamma) \right\}_{\delta \in \overline{\Delta}_{\kappa}}
\end{equation}
gives a group homomorphism. 
It is easy to show that \eqref{e:lk} is an isomorphism. \qed
\end{proof}
\smallskip%

Fix a basepoint $b_0 \in M_{\kappa}^{+}$. We now 
specify \say{generators} for $\pi_1(M_{\kappa}^{+}, b_0)$. For each $H_{\delta}$ we pick a loop 
$\gamma_{\delta}$ such that there exists a nullhomotopy of $\gamma_{\delta}$ in $M_{\kappa}^{+} \cup H_{\delta}$ that intersects $H_{\delta}$ transversally at a single point.
\begin{lemma}\label{l:fund-group}
$\pi_1(M_{\kappa}^{+},b_0)$ is normally-generated by the set 
$\left\{ \gamma_{\delta} \right\}_{\delta \in \overline{\Delta}_{\kappa}}$.
\end{lemma}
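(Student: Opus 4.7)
The strategy is a standard transversality argument that reduces any loop in $M_{\kappa}^{+}$ to a product of conjugates of meridians around the complex hyperplanes $H_{\delta}$, mirroring the classical description of fundamental groups of hyperplane-arrangement complements.

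Given $[\gamma]\in\pi_1(M_{\kappa}^{+},b_0)$, contractibility of $\overline{M}_{\kappa}^{+}$ supplies a nullhomotopy $\psi\colon D\to\overline{M}_{\kappa}^{+}$ of $\gamma$; by Lemma \ref{zariski-open} the compact image $\psi(D)$ meets only finitely many of the complex codimension-$1$ submanifolds $H_{\delta}$, say $H_{\delta_1},\dots,H_{\delta_N}$. After a small perturbation of $\psi$ rel $\partial D$, I may assume $\psi$ is transverse to each $H_{\delta_i}$, so that $\psi^{-1}(H_{\delta_i})$ is a finite set of interior points $\{p_{i,1},\dots,p_{i,k_i}\}$. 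Excising small pairwise-disjoint disks $U_{i,j}\subset D$ around each $p_{i,j}$ and restricting $\psi$ to $D\setminus\bigsqcup U_{i,j}$ produces a homotopy inside $M_{\kappa}^{+}$ identifying $\gamma$ with a product of conjugates of the small meridian loops $\mu_{i,j}=\psi(\partial U_{i,j})$. By construction each $\mu_{i,j}$ bounds the disk $\psi(\overline{U_{i,j}})\subset M_{\kappa}^{+}\cup H_{\delta_i}$ which meets $H_{\delta_i}$ transversally in exactly one point, so in the terminology introduced just before the lemma $\mu_{i,j}$ is a meridian around $H_{\delta_i}$.

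It then remains to identify each such meridian, up to conjugation in $\pi_1(M_{\kappa}^{+},b_0)$, with $\gamma_{\delta_i}^{\pm 1}$. Two meridians around the same $H_{\delta}$, together with their witness disks, can be joined by a continuous family of (loop, disk)-pairs inside $M_{\kappa}^{+}\cup H_{\delta}$, and the resulting 1-parameter family gives the required conjugacy; the sign ambiguity is absorbed by reversing the orientation of the witness disk and is therefore invisible in the normal closure. Combining these three steps expresses $[\gamma]$ as a product of conjugates of the $\gamma_{\delta}$, as required.

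The only step I expect to give real trouble is the conjugacy argument above, which needs $H_{\delta}^{+}:= H_{\delta}\cap\overline{M}_{\kappa}^{+}$ to be connected: if $H_{\delta}^{+}$ had two components, meridians around distinct components would be \emph{a priori} independent generators, and the claimed normal generation by a single $\gamma_{\delta}$ per $\delta$ would fail. I would verify connectedness by regarding $\overline{M}_{\kappa}^{+}$ as one of the two contractible components of the period domain of the signature-$(3,19)$ lattice orthogonal projection onto $\kappa^{\perp}$ (signature $(2,19)$), and then showing, by a short orientation-convention argument on oriented positive $2$-planes, that the further linear constraint $\delta^{\perp}$ singles out a single contractible component of the period domain of the signature-$(2,18)$ lattice $(\kappa,\delta)^{\perp}$, rather than capturing both.
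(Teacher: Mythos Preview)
The paper's own proof reads in its entirety: ``The proof is straightforward and is omitted.'' Your proposal therefore supplies exactly what the paper elides, and your approach---expressing an arbitrary loop as a product of conjugates of meridians via a transverse nullhomotopy in $\overline{M}_{\kappa}^{+}$---is the standard argument the author presumably has in mind. The argument is correct.

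Two small refinements. First, for the conjugacy of two meridians around the same $H_{\delta}$, the path in $H_{\delta}^{+}$ along which you transport the linking circle should avoid the intersections $H_{\delta}\cap H_{\delta'}$ for $\delta'\neq\pm\delta$, so that the circles remain in $M_{\kappa}^{+}$ throughout; this is possible because those intersections have real codimension~$2$ in $H_{\delta}^{+}$, and a connected manifold minus a locally finite union of codimension-$2$ submanifolds is still connected. Second, your connectedness argument for $H_{\delta}^{+}$ can be streamlined: complex conjugation $\varphi\mapsto\bar\varphi$ interchanges the two components of $\overline{M}_{\kappa}$ and also the two components of the period domain $H_{\delta}$ (both decompositions being by orientation of the positive $2$-plane), so each component of $H_{\delta}$ must lie in a distinct component of $\overline{M}_{\kappa}$, whence $H_{\delta}\cap\overline{M}_{\kappa}^{+}$ is a single (contractible) component.
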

\begin{proof}
Throughout the proof, all loops are based at $b_0$. 
Let $\mu$ be a loop in $M_{\kappa}^{+}$ such that there exists $H_{\delta_{0}}$ and a nullhomotopy of $\mu$ in $M_{\kappa}^{+} \cup H_{\delta_{0}}$ that intersects $H_{\delta_{0}}$ transversally at a single point. Such a $\mu$ is called a meridian. Since $H_{\delta_{0}}$ is connected, it follows that $\mu$ and $\gamma_{\delta_{0}}$ are conjugate in $\pi_1(M_{\kappa}^{+},b_0)$. Let $\gamma$ be an arbitrary loop in $M_{\kappa}^{+}$. Since 
$M_{\kappa}^{+}$ is contractible, the loop $\gamma$ bounds a disc. We may assume that this 
disc is transverse to each $H_{\delta}$, $\delta \in \Delta_{\kappa}$.  It clear now that $\gamma$ is a product of a bunch of meridians, each being a conjugate of some $\gamma_{\delta}$. \qed
\end{proof}
\medskip%

Fix $\kappa_0 \in \Lambda_{\rr}$ with $\langle \kappa_0, \kappa_0 \rangle$ > 0. 
From now on, we write $B$ (resp. $\overline{B}$) for  
$M_{\kappa_0}^{\,+}$ (resp. $\overline{M}_{\kappa_0}^{\,+}$). 
Let $\calx \to B$ the universal family 
of polarized K3 surfaces, defined in \S\,\ref{universal}. Each fiber $X_b$ admits a K{\"a}hler form in the class 
$\kappa_0 \in V^{+}_P(X_b)$. Since the space of K{\"a}hler 
forms representing a given K{\"a}hler class is convex and therefore contractible, we may assume given 
a family of fiberwise K{\"a}hler forms 
$\left\{ \omega_b \right\}_{b \in B}$ which varies smoothly with $b$ (\cite{K-S}). Thus, there is a monodromy map 
\begin{equation}\label{e:B-monodromy}
\pi_1(B, b_0) \to \pi_0 \symp(X_{b_0},\omega_{b_0}).
\end{equation}
We shall prove:
\begin{enumerate}[wide = 0pt, label=\normalfont{(\alph*)}]
\item $\pi_1(B, b_0) 
\xrightarrow{\,\text{\eqref{e:B-monodromy}}\,} \pi_0 \symp(X_{b_0},\omega_{b_0}) \to 
\pi_0 \diff(X_{b_0})$ is a nullhomomorphism.
\smallskip%

\item The following diagram is commutative:
\[
\begin{tikzcd}
\pi_1(B, b_0) \arrow{d}[swap]{\pi_1/[\pi_1,\pi_1]} \arrow{r}{} & \pi_0\symp(X_{b_0},\omega_{b_0}) \arrow{d}{q}\\
H_1(B, b_0) \arrow{r}{\ell} & \oplus_{\delta \in \overline{\Delta}_{\kappa_0}}\zz_2\,,
\end{tikzcd}
\]
where $\ell$ is the homomorphism defined in Lemma \ref{l:contract}.
\end{enumerate}
Before proving (a) we make a definition: 
Given $\delta_0 \in \Delta_{k}$ and a point $\varphi \in \overline{B}$, with 
$\langle \varphi, \delta_0 \rangle = 0$, we say that $\varphi$ is good if 
$\langle \varphi, \delta \rangle \neq 0$ for all 
$\delta \in \Delta_{\kappa_0} - \left\{ \delta_0 \right\}$. 
This means that $\varphi$ lies on a single \say{divisor} $H_{\delta}$. The subset of $H_{\delta_0}$ consisting of good points is the complement of 
a proper analytic subvariety and hence is open and dense (and therefore connected).
\smallskip%

To prove (a), it suffices by Lemma \ref{l:fund-group} to show that 
the restriction of $\calx$ to each $\gamma_{\delta}$ is $C^{\infty}$-trivial. 
Fix $\delta_0 \in \Delta_{\kappa}$. Considering $\gamma_{\delta_0}$ as a free loop we find a homotopy of $\gamma_{\delta_0}$ into a loop so small that it becomes the boundary of a holomorphic disc $D$ transverse to $H_{\delta_0}$. 
By perturbing $D$, we may 
arrange that it intersects $H_{\delta_0}$ at a good point. We set $\varphi_0 = D \cap H_{\delta}$. 
By Lemma \ref{continuity}, there is a neighbourhood $U$ of 
$\varphi_0$ in $B$ and a neighbourhood $K$ of $\kappa_0$ in 
$\Lambda_{\rr}$ such that the following holds: 
\begin{equation}\label{K-neighbour}
\text{
    for each $(\varphi, \kappa) \in U \times K$, if $\langle \delta, \varphi \rangle = 0$, then 
    $\langle \delta, \kappa \rangle \neq 0$ for all $\delta \in \Delta - \left\{ \delta_0 \right\}$.}
\end{equation}
Shrinking $D$ if necessary, we may assume that $D$ is contained in $U$.
\smallskip%

Choose a coordinate $t$ on $D$ such that $\varphi_0$ is given 
by $t = 0$. Let $D^{*} = D - \left\{ 0 \right\}$. Let $\caly = \calx|_{D^{*}}$ be the restriction of $\calx$ to $D^{*}$, and let 
$p \colon \caly \to D^{*}$ be the projection. 
The family $\calx$ carries a canonical marking. So does $\caly$, being 
a subfamily of $\calx$; call this marking 
$\alpha \colon \calr^2p_{*}(\zz) \to \overline{\Lambda}_{D^{*}}$. 
We shall prove that there is a marked family of non-singular K3 surfaces $\caly' \to D$ whose restriction to $D^{*}$ coincides with $\caly$. Let $(Y'_0,\alpha')$ be a marked K3 surface whose Burns-Rapoport period is given by
\[
(\varphi_0,\kappa_0 - \hslash \, \delta_0)\quad
\text{for $\hslash$ positive and so small that $\kappa_0 - \hslash \, \delta_0 \in K$.}
\]
Shrinking the neighbourhood $U$ if needed, we assume given 
the local universal deformation 
$p' \colon (\caly', Y'_0) \to (U,\varphi_0)$, endowed with a natural marking $\calr^2p'_{*}(\zz) \to H^2(Y'_0;\zz)$. We also 
assume (by further shrinking 
$D$ toward $t = 0$) that $D \subset U$. Now consider the restriction 
$\caly'|_{D}$. We shall use $\caly'$ to denote this family, also.
\smallskip%

For each $t \in D^{*}$, both 
$\Delta^{+}(Y_t)$ and $\Delta^{+}(Y'_t)$ are empty. Hence, 
the isomorphism $\theta \colon \calr^2p_{*}(\zz) \to \calr^2p'_{*}(\zz)$ defined 
by the commutative diagram
\[
\begin{tikzcd}
\calr^2p_{*}(\zz) \arrow{d}[swap]{\alpha} \arrow{r}{\theta} & 
\calr^2p'_{*}(\zz) \arrow{d}{\alpha'}\\
\overline{\Lambda}_{D - \left\{ 0 \right\}} \arrow{r}{\text{id}} & \overline{\Lambda}_{D - \left\{ 0 \right\}}
\end{tikzcd}
\]
induces a unique family isomorphism $\Theta \colon \caly' \to \caly$ that fits 
into the diagram:
\begin{equation}
\begin{tikzcd}[column sep=small]
\caly' \arrow{rr}{\Theta} \arrow{dr} & & \caly \arrow{dl}\\
& D^* &
\end{tikzcd}
\end{equation}
In other words, the family $\caly$, which is defined 
over $D - \left\{ 0 \right\}$, extends to a family of non-singular 
surfaces defined for all $t \in D$. Conclusion: the fiber bundle 
$\caly \to D - \left\{ 0 \right\}$ is $C^{\infty}$-trivial, and (a) follows.
\smallskip%

Abusing notation, we write $\caly$ for the extension of 
$\caly \to D^*$ to the whole disc $D$. We write $Y_0$ instead 
of $Y'_0$ for the central fiber of this extension. Let $\left\{ \omega_t \right\}_{t \in \del D}$ be a family of cohomologous K{\"a}hler forms, with $[\omega_t] = \kappa_0$, on the fibers $Y_t$ over $\del D$. To prove (b), it suffice to show that
\[
q_{\delta}( \left\{ \omega_t \right\}_{t \in \del D } ) = 
\begin{cases}
      1 & \text{for $\delta = \delta_0$} \\
      0 & \text{for all $\delta = \overline{\Delta}_{\kappa} - \left\{ \delta_0 \right\}$.}
\end{cases}
\]
To begin with, we choose an extension of $\left\{ \omega_t \right\}_{t \in \del D}$ to a family of non-cohomologous K{\"a}hler forms over the whole $D$. We shall do this as follows: Pick a section 
$\kappa \in \Gamma(D,\overline{\Lambda}_{D} \otimes \rr)$ that has the following properties: 
\begin{enumerate}[label={(\arabic*)}]    
    \item $\kappa|_t \in V^{+}(Y_t)$ for all $t \in D$.
\smallskip%
    \item $\kappa|_t = \kappa_0$ for $t \in \del D$.
\smallskip%
    \item $\kappa|_0 = \kappa_0 - \hslash \, \delta_0$.       
\end{enumerate}
Such a section exists because each $V^{+}(Y_t)$ is contractible. Observe that
\[
\langle -\delta_0, \kappa_0 - \hslash\,\delta_0\rangle = -2\,\hslash < 0,
\]
hence $\delta_0$ lies in $\Delta^{+}(Y_0)$ and $(-\delta_0)$ does not. It follows then that 
\begin{center}
$V^{+}_P(Y_0) = \left\{ \kappa \in V^{+}(Y_0)\ |\ \langle \kappa, \delta_0 \rangle > 0 \right\}$ and 
$V^{+}_P(Y_t) = V^{+}(Y_t)$ for all $t \in D^{*}$.
\end{center}
In other words, $\Delta^{+}(Y_t)$ is empty unless $t = 0$, and 
$\Delta^{+}(Y_0)$ consists of the single element $\delta_0$. Hence, $\kappa|_t \in V^{+}_P(Y_t)$, and $\kappa$ gives a polarization of $\caly$. Choose an extension $\left\{ \omega_t \right\}_{t \in D}$ of 
$\left\{ \omega_t \right\}_{t \in \del D}$ such that $[\omega_t] = \kappa|_t$.
\smallskip%

Let $\left\{ g_t \right\}_{t \in D}$ be the family of 
fiberwise Hermitian metrics on $\caly$ associated to 
$\left\{ \omega_t \right\}_{t \in D}$. Pick a 
spin$^{\cc}$ structure $\fr{s}_{\delta}$ on $T_{\caly/D}$ which, when restricted 
to $Y_0$, satisfies:
\begin{equation}\label{c1delta}
c_1(\fr{s_{\delta}}) = c_1(Y_0)(=0) + 2\, \delta.
\end{equation}
Note that \eqref{c1delta} specifies $\fr{s_{\delta}}$ uniquely. As in \eqref{eq:taubes-eta}, set:
\begin{equation}\label{eta-t-again}
\eta_{t} = -i F_{{A_0}_t}^{+} - \rho\, \omega_t \in \Omega^2_{+}(Y_t).
\end{equation}
Consider the Seiberg-Witten equations parametrized by the 
family $\left\{ (g_t, \eta_t) \right\}_{t \in D}$. To describe their solutions, we 
use Theorem \ref{t:kahler}. 
Let $\Pi^{*}$, $\fr{M}^{\fr{s}_{\delta}}$, and 
$\pi_{\fr{s}_{\delta}} \colon \fr{M}^{\fr{s}_{\delta}} \to \Pi^*$ be 
as in \S\,\ref{sw-family}. We embed $D$ into $\Pi^{*}$ by the map 
\[
t \to (g_t,\eta_t),\quad\text{where $\eta_t$ is given by \eqref{eta-t-again}.}
\]
If $\delta \neq \delta_0$, then 
$\delta \centernot\in H^{1,1}(Y_t;\rr)$ for all $t \in D$, and we 
have (by Theorem \ref{t:kahler})
\begin{equation}\label{empty-moduli}
\bigcup_{t \in D} \pi^{-1}_{\fr{s}_{\delta}} (g_t,\eta_t) = \emptyset\quad 
\text{for all $\delta \in \Delta_{\kappa_0} - \left\{ \pm \delta_0 \right\}$.}
\end{equation}
Since
\[
\int_X [\omega_t] \cup \kappa_0 > 0\quad \text{for all $t \in D$,}
\]
there is $\rho$ so large that $\left\{ \eta_{t} \right\}_{t \in D}$ becomes an admissible extension of $\left\{ \eta_{t} \right\}_{t \in \del D}$. Hence,
\begin{equation}
Q_{\delta}( \left\{ \omega_t \right\}_{t \in \del D } ) = 0\quad
\text{for all 
$\delta \in \Delta_{\kappa} - \left\{ \pm \delta_0 \right\}$,}
\end{equation}
and 
$q_{\delta}( \left\{ \omega_t \right\}_{t \in \del D } ) = 0$ for 
all $\delta \in \overline{\Delta}_{\kappa} - \left\{ \delta_0 \right\}$.
\smallskip%

Now let $\delta = \pm \delta_0$. Making $\rho$ so large that
\[
\rho > \rho_0 = 4 \pi \left( \int_X \delta_0 \cup [\omega_t] \right)
\left( \int_X [\omega_t] \cup [\omega_t] \right)^{-1},
\]
we insure that the corresponding 
Seiberg-Witten equations have no reducible solutions. 
Since for all $t \in D$, $(-\delta_0) \centernot\in \Delta^{+}(Y_t)$, 
it follows that \eqref{empty-moduli} still holds 
for $\delta = \delta_0$. Hence,
\[
Q_{(-\delta_0)}( \left\{ \omega_t \right\}_{t \in \del D } ) = 0.
\]
$\delta_0 \centernot\in \Delta^{+}(Y_t)$ unless $t = 0$. Let $C$ be a divisor in $Y_0$ representing $\delta_0$. The divisor $C$ 
is irreducible, or the set $\Delta^{+}(Y_0)$ would contain some other classes. Moreover, $C$ is a smooth rational curve. This follows upon applying the adjunction formula to $C$. If $C'$ is another effective 
divisor in the class $\delta_0$, then $C' = C$. This is proved by 
observing that $C$ is irreducible and has negative self-intersection number. Thus, if we abbreviate $\fr{s}_{\delta_0}$ to $\fr{s}_0$, we have
\[
\pi^{-1}_{\fr{s}_{0}} (g_0,\eta_0) = \text{pt},\quad \pi^{-1}_{\fr{s}_{0}} (g_t,\eta_t) = \emptyset\quad 
\text{for all $t \in D^{*}$.}
\]
In order to prove that 
$Q_{\delta_0}( \left\{ \omega_t \right\}_{t \in \del D } ) = 1$ it 
suffice to show that $\pi_{\fr{s}_{0}}$ is transversal to $D$. Identifying the groups $\left\{ H^2(Y_t;\cc) \right\}_{t \in D}$, we consider the infinitesimal variation of Hodge structures (\cite{Griff}):
\[
\Omega_{*} \colon T_{D} \to \text{Hom}\,(H^{1,1}, H^{0,2}),\quad\text{where $H^{p,q} = H^{p,q}(Y_0;\cc)$.} 
\]
It was shown in \cite[\S\,6]{gleb} that $\pi_{\fr{s}_{0}}$ is transversal to $D$, provided
\begin{equation}\label{transverse}
\delta_0 \centernot\in \text{ker}\, \Omega_{*}(\del_t),\quad\text{where 
$\del_t$ is a generator for $T_{D}$.} 
\end{equation}
This last condition is equivalent to the condition that the period map 
\[
\mathrm{T}_{\caly,\alpha} \colon D \to \varphi
\]
is transversal to the divisor $H_{\delta_0}$. This is the case by definition.

\bibliographystyle{plain}
\bibliography{references}

\begin{thebibliography}{10}

\bibitem{Barag-1}
D.~Baraglia.
\newblock Obstructions to smooth group actions on 4-manifolds from families
  {S}eiberg-{W}itten theory.
\newblock {\em Adv.\,Math.}, 354, 2019.
\newblock id.\,106730.

\bibitem{Barag}
D.~Baraglia and H.~Konno.
\newblock A gluing formula for families {S}eiberg-{W}itten invariants.
\newblock {\em Geom.\,Topol.}, 24(3):1381--1456, 2020.

\bibitem{B-R}
D.~Burns and M.~Rapoport.
\newblock On the {T}orelli problem for {K{\"a}hlerian} {K3} surfaces.
\newblock {\em Annales scientifiques de l'\'Ecole Normale Sup\'erieure}, Ser.
  4, 8(2):235--273, 1975.

\bibitem{Griff}
P.~Griffiths.
\newblock {Periods of integrals on algebraic manifolds. II: Local study of the
  period mapping.}
\newblock {\em Amer.\,J.\,Math.}, 90:805--865, 1968.

\bibitem{Huyb}
D.~Huybrechts.
\newblock {\em Lectures on K3 Surfaces}.
\newblock Cambridge Studies in Advanced Mathematics. Cambridge University
  Press, 2016.

\bibitem{K-S}
K.~Kodaira and D.~Spencer.
\newblock On deformations of complex analytic structures, {III}. {S}tability
  theorems for complex structures.
\newblock {\em Annals of Mathematics}, 71(1):43--76, 1960.

\bibitem{K}
P.~Kronheimer.
\newblock Some non-trivial families of symplectic structures.
\newblock 1997.
\newblock available from:
  \url{http://people.math.harvard.edu/~kronheim/papers.html}.

\bibitem{K-M}
P.~Kronheimer and T.~Mrowka.
\newblock The genus of embedded surfaces in the projective plane.
\newblock {\em Math.\,Res.\,Lett.}, 1:797--808, 1994.

\bibitem{LL}
{T.-J.} Li and {A.-K.} Liu.
\newblock Family {S}eiberg-{W}itten invariants and wall-crossing formulas.
\newblock {\em Communications in Analysis and Geometry}, 9(4):777--823, 2001.

\bibitem{Looij-Pet}
E.~Looijenga and C.~Peters.
\newblock Torelli theorems for {K}\"ahler {K3} surfaces.
\newblock {\em Compositio Mathematica}, 42(2):145--186, 1980.

\bibitem{McD}
D.~McDuff.
\newblock Symplectomorphism groups and almost complex structures.
\newblock In {\em {Essays on geometry and related topics. M\'emoires d\'edi\'es
  \`a Andr\'e Haefliger. Vol. 2}}, pages 527--556. Gen\`eve: L'Enseignement
  Math\'ematique, 2001.

\bibitem{McD-Sa-2}
D.~McDuff and D.~Salamon.
\newblock {\em {I}ntroduction to {S}ymplectic {T}opology}.
\newblock Oxford Graduate Texts in Mathematics. Oxford University Press, third
  edition, 2017.

\bibitem{Mil-H}
J.~Milnor and D.~Husemoller.
\newblock {\em Symmetric Bilinear Forms}.
\newblock Springer Berlin Heidelberg, 1973.

\bibitem{Morg}
J.~Morgan.
\newblock {\em The Seiberg-Witten Equations and Applications to the Topology of
  Smooth Four-Manifolds.}
\newblock Princeton University Press, 1996.

\bibitem{Nak}
N.~Nakamura.
\newblock The {S}eiberg-{W}itten equations for families and diffeomorphisms of
  4-manifolds.
\newblock {\em Asian Journal of Mathematics}, 7(1):133--138, 2003.

\bibitem{Nic}
L.~Nicolaescu.
\newblock {\em {N}otes on {S}eiberg-{W}itten theory}.
\newblock Graduate studies in mathematics. American Mathematical Society,
  Providence, R.I, 2000.

\bibitem{R1}
D.~Ruberman.
\newblock An obstruction to smooth isotopy in dimension 4.
\newblock {\em Math.\,Res.\,Lett.}, 5(6):743--758, 1998.

\bibitem{R2}
D.~Ruberman.
\newblock Positive scalar curvature, diffeomorphisms and the {S}eiberg-{W}itten
  invariants.
\newblock {\em Geom.\,Topol.}, 5(2):895--924, 2001.

\bibitem{Sei-2}
P.~Seidel.
\newblock Graded lagrangian submanifolds.
\newblock {\em Bulletin de la Soci\'et\'e Math\'ematique de France},
  128(1):103--149, 2000.

\bibitem{Sei-1}
P.~Seidel.
\newblock {\em Lectures on four-dimensional {D}ehn twists}, pages 231--267.
\newblock Springer Berlin Heidelberg, 2008.

\bibitem{Sher-Smith}
N.~Sheridan and I.~Smith.
\newblock Symplectic topology of {K3} surfaces via mirror symmetry.
\newblock {\em J.\,Amer.\,Math.\,Soc.}, 33(3):875--915, 2020.

\bibitem{Siu-2}
Y.-T. Siu.
\newblock A simple proof of the surjectivity of the period map of {K3}
  surfaces.
\newblock {\em Manuscripta Mathematica}, 35(3):311--321, 1981.

\bibitem{Siu-1}
Y.-T. Siu.
\newblock Every {K3} surface is {K}{\"a}hler.
\newblock {\em Invent. math.}, 73:139--150, 1983.

\bibitem{gleb}
G.~Smirnov.
\newblock From flops to diffeomorphism groups, 2021.
\newblock \url{https://arxiv.org/abs/2002.01233}.

\bibitem{Taub-2}
C.~Taubes.
\newblock More constraints on symplectic forms from the {S}eiberg-{W}itten
  invariants.
\newblock {\em Math. Res. Lett.}, pages 9--13, 1995.

\bibitem{Ton}
D.~Tonkonog.
\newblock Commuting symplectomorphisms and {D}ehn twists in divisors.
\newblock {\em Geom.\,Topol.}, 19(6):3345--3403, 2015.

\end{thebibliography}

\end{document}